\documentclass[journal,twoside,web]{ieeecolor}
\usepackage{generic}
\usepackage{cite}
\usepackage{amsmath,amssymb,amsfonts}
\usepackage{algorithmic}
\usepackage{graphicx}
\usepackage{textcomp}
\usepackage{hyperref}
\hypersetup{hidelinks}
\usepackage{bm}
\usepackage{tablefootnote}

\def\BibTeX{{\rm B\kern-.05em{\sc i\kern-.025em b}\kern-.08em
    T\kern-.1667em\lower.7ex\hbox{E}\kern-.125emX}}
\graphicspath{{./figures/}}
\DeclareGraphicsExtensions{.pdf,.eps}

\newtheorem{theorem}{Theorem}[section]
\newtheorem{lemma}[theorem]{Lemma}

\newtheorem{proposition}[theorem]{Proposition}
\newtheorem{assumption}[theorem]{Assumption}

\newtheorem{remark}[theorem]{Remark}

\newcommand{\bz}{\mathbf{z}}
\newcommand{\bx}{\mathbf{x}}
\newcommand{\by}{\mathbf{y}}
\newcommand{\be}{\mathbf{e}}
\newcommand{\blambda}{\bm{\lambda}}
\newcommand{\bdelta}{\bm{\delta}}
\newcommand{\bmu}{\bm{\mu}}

\newcommand{\bE}{\mathbf{E}}

\newcommand{\bS}{\mathbf{S}}

\newcommand{\gradv}[1]{\nabla_{\!#1}}

\begin{document}

    \title{Prescribed-Time Distributed Generalized Nash Equilibrium Seeking}
    \author{Liraz Mudrik, Isaac Kaminer, Sean Kragelund, and Abram H. Clark
    \thanks{This work was supported in part by the Office of Naval Research Science of Autonomy Program under Grant No.\ N0001425GI01545 and Consortium for Robotics Unmanned Systems Education and Research at the Naval Postgraduate School.}
    \thanks{L. Mudrik, I. Kaminer, and S. Kragelund are with the Department of Mechanical and Aerospace Engineering, Naval Postgraduate School, Monterey, CA, 93943, USA (e-mail: liraz.mudrik.ctr@nps.edu; kaminer@nps.edu; spkragel@nps.edu). }
    \thanks{A. H. Clark is with the Department of Physics, Naval Postgraduate School, Monterey, CA, 93943, USA (e-mail: abe.clark@nps.edu).}}
    
    \maketitle
    
    \begin{abstract}
    Safety-critical multi-agent systems, from cooperative guidance to collision avoidance, must often reach a coordinated decision by a hard deadline rather than merely converge to one eventually. This paper proposes the first fully distributed algorithm that solves the generalized Nash equilibrium (GNE) problem, a non-cooperative game with shared coupling constraints and general cost coupling, at a user-prescribed time $T$ independent of initial conditions. The foundation is a centralized, prescribed-time result built on the optimization Lyapunov function framework and implemented via unnormalized Hessian-gradient feedback, chosen because, unlike the Newton and normalized Hessian-gradient realizations, it naturally splits into per-agent computations. Distributing this feedback requires each agent to run three coupled processes simultaneously: a prescribed-time observer of the global state, a local optimization law, and a dual-consensus mechanism that enforces the shared multipliers of the variational GNE. Their simultaneous operation is the core difficulty, as the optimization continually displaces the states the observers track, while estimation errors corrupt the gradients that drive the optimization. We resolve this coupling with a multi-rate gain schedule whose observer and dual-consensus layers contract strictly faster than the optimization layer, so that every error component vanishes exactly at $T$. A Fischer-Burmeister reformulation keeps the design projection-free while enforcing the constraints at the deadline. Numerical results for a Cournot game and a time-critical sensor-coverage problem validate the approach and demonstrate its use as a solver-in-the-loop for time-critical autonomy.
    \end{abstract}
    
    \begin{IEEEkeywords}
    Multi-agent systems, Lyapunov methods, control-centric optimization methods, game theory, prescribed-time control, gradient methods
    \end{IEEEkeywords}

\section{Introduction}
\label{sec:introduction}

\IEEEPARstart{O}{ptimization} and equilibrium seeking in multi-agent networks have become fundamental problems in modern control theory, driven by the need to coordinate large-scale decentralized systems. The generalized Nash equilibrium problem (GNEP), where agents optimize individual objective functions subject to shared coupling constraints, is particularly relevant for resource allocation tasks where one agent's decision restricts the feasible set of others~\cite{facchinei_generalized_2010, rosen_existence_1965}. Applications of GNEPs span a wide range of critical infrastructure, including demand-side management in smart grids~\cite{chen_distributed_2021}, coordinated trajectory planning for robotic swarms~\cite{kang_distributed_2026}, and competitive power control in communication networks~\cite{scutari_potential_2006}.

Given the scale, privacy, and hardware requirements of these systems, centralized solutions are often impractical. Consequently, the research focus has shifted to distributed algorithms where agents communicate only with local neighbors. Extensive work has been done on distributed GNE seeking using primal-dual dynamics~\cite{pavel_distributed_2020, yi_distributed_2019}, operator splitting methods~\cite{yi_operator_2019, belgioioso_distributed_2019}, penalty-based approaches~\cite{romano_inexact-penalty_2023, sun_continuous-time_2021}, and consensus-based schemes~\cite{ye_distributed_2017}; see~\cite{hu_distributed_2022,ye_distributed_2023_survey} for comprehensive surveys. These methods handle general cost coupling and shared constraints, but rely on asymptotic convergence.

While asymptotic stability is sufficient for many scenarios, it is often inadequate for safety-critical cyber-physical systems operating under hard real-time constraints. In applications such as missile guidance~\cite{yuan_cooperative_2026}, competitive resource allocation in multi-cluster networks~\cite{meng_linear_2023}, or collision avoidance~\cite{shakouri_prescribed-time_2022}, agents must agree on a valid solution by a strict deadline to prevent system failure. Standard distributed algorithms, which typically exhibit linear or exponential convergence rates~\cite{qu_exponential_2019}, cannot provide such guarantees, as their settling times depend heavily on initial conditions and network connectivity.

To address the need for faster convergence, recent research has explored finite-time~\cite{bhat_finite-time_2000} and fixed-time~\cite{polyakov_nonlinear_2012} stability. Finite-time methods~\cite{cortes_finite-time_2006} guarantee convergence in finite time, but the settling time remains dependent on initial states. Fixed-time methods~\cite{garg_fixed-time_2021} provide a uniform upper bound independent of initial conditions, and have been applied to Nash equilibrium seeking in~\cite{poveda_fixed-time_2023}. However, this settling-time bound is set only indirectly through the gains and is typically a conservative over-estimate of the true convergence time, so it cannot be placed at a specific deadline, limiting its utility in time-critical coordination.

The most recent advancement is prescribed-time control~\cite{song_time-varying_2017, song_prescribed-time_2023_survey}, which employs time-varying gains to force exact convergence at a user-defined time, independent of initial conditions. This paradigm has been successfully applied to consensus problems~\cite{wang_prescribed-time_2019}, centralized optimization~\cite{mudrik_optimization_2025-1}, and distributed Nash equilibrium seeking for standard (uncoupled) games~\cite{zhao_prescribed-time_2023, qian_prescribed-time_2024}. However, none of these methods simultaneously addresses shared coupling constraints and user-prescribed convergence deadlines for GNEPs.

This paper closes that gap. We propose a fully distributed algorithm for general GNE seeking that drives all agents to the variational GNE (v-GNE) exactly at a prescribed time $T$, for the standard GNEP class studied in the distributed GNE literature~\cite{pavel_distributed_2020, yi_distributed_2019, sun_continuous-time_2021}, with arbitrary functional coupling in the costs and shared constraints. This setting is strictly broader than the aggregative games to which the recent predefined-time method of~\cite{guo_distributed_2025} applies, where each cost depends only on local decisions and a scalar aggregate; problems such as pairwise collision avoidance and bilateral contracts lie outside that subclass and force each agent to estimate the full decision profile. The architecture is simultaneous: every agent concurrently runs a prescribed-time observer of the global state, a local optimization law, and a dual-consensus mechanism enforcing the shared multipliers of the v-GNE.

The contributions are fivefold. First, we establish a centralized prescribed-time convergence result for GNE seeking, instantiating the optimization Lyapunov function (OLF) framework of~\cite{mudrik_optimization_2025-1} in an unnormalized Hessian-gradient (uHGD) realization; unlike the Newton and normalized realizations, its right-hand side decomposes into per-agent computations and is therefore implementable distributively. Second, we design a prescribed-time distributed state observer that fixes no leader a priori: each agent pins its own state and is tracked by the rest of the network. Combining the prescribed-time scaling of~\cite{wang_prescribed-time_2019} with the self-anchored structure of~\cite{zou_continuous-time_2021}, it converges exactly even though the anchored states are driven by the optimization, a regime neither reference treats. Third, we obtain the first fully distributed prescribed-time GNE-seeking algorithm, with three concurrently coupled layers (primal consensus, optimization, and dual consensus); the dual-consensus layer, absent from prior prescribed-time NE work~\cite{zhao_prescribed-time_2023, qian_prescribed-time_2024}, is what makes the v-GNE attainable. Fourth, we certify convergence with a single composite time-varying Lyapunov function unifying the optimization residual, consensus errors, and dual disagreements, together with the multi-rate gain schedule that renders their combination dissipative. Fifth, the design is projection-free by using the smoothed Fischer-Burmeister (FB) function~\cite{liao-mcpherson_regularized_2019}, reaching a feasible $\epsilon$-approximate v-GNE at the deadline and recovering the exact v-GNE as $\epsilon \to 0$.
Table~\ref{tab:comparison} summarizes the key distinctions between the proposed distributed algorithm and existing distributed Nash equilibrium-seeking methods.

\begin{table*}[t]
\centering
\caption{Comparison with existing distributed NE/GNE seeking methods.}
\label{tab:comparison}
\renewcommand{\arraystretch}{1.15}
\footnotesize
\begin{tabular}{l c c c c c c c c}
\hline
\textbf{Feature} & \cite{ye_distributed_2017} & \cite{pavel_distributed_2020,yi_operator_2019,yi_distributed_2019,romano_inexact-penalty_2023,sun_continuous-time_2021} & \cite{meng_linear_2023} & \cite{poveda_fixed-time_2023} & \cite{zhao_prescribed-time_2023} & \cite{qian_prescribed-time_2024} & \cite{guo_distributed_2025} & \textbf{Ours} \\
\hline
Game class & NE & GNE & NE & NE & NE & NE & GNE & GNE \\
Shared constraints & $\times$ & \checkmark & $\times$ & $\times$ & $\times$ & $\times$ & \checkmark & \checkmark \\
General cost coupling & \checkmark & \checkmark & \checkmark & \checkmark & \checkmark & \checkmark & $\times$ & \checkmark \\
Convergence rate & Asymptotic & Asymptotic & Exponential & Fixed-time & Prescribed & Prescribed & Predefined & Prescribed \\
Full decision info.\ & \checkmark & \checkmark & \checkmark & \checkmark & \checkmark & \checkmark & $\times$ & \checkmark \\
Graph type & Undirected & Undirected & Undirected & Undirected & Undirected & Directed & Undirected & Undirected \\
\hline
\end{tabular}
\end{table*}

The remainder of this paper is organized as follows. Section~\ref{sec:formulation} formulates the GNE problem and states the standing assumptions. Section~\ref{sec:centralized} develops the centralized GNE seeking result. Section~\ref{sec:algorithm} details the distributed control architecture. Section~\ref{sec:analysis} provides the theoretical convergence analysis. 
Section~\ref{sec:simulations} presents numerical results, followed by concluding remarks.

\section{Problem Formulation}
\label{sec:formulation}
We consider a network of $N$ agents interacting over a communication graph $\mathcal{G} = (\mathcal{V}, \mathcal{E})$, where $\mathcal{V} = \{1, \dots, N\}$ is the set of agents and $\mathcal{E} \subseteq \mathcal{V} \times \mathcal{V}$ is the set of edges representing communication links. The graph is assumed to be undirected and connected. The adjacency matrix $\mathcal{A} = [a_{ij}] \in \mathbb{R}^{N \times N}$ is defined such that $a_{ij} > 0$ if $(j, i) \in \mathcal{E}$ and $a_{ij} = 0$ otherwise. The Laplacian matrix $\mathcal{L} \in \mathbb{R}^{N \times N}$ is defined as $\mathcal{L} = \mathcal{D} - \mathcal{A}$, where $\mathcal{D}$ is the degree matrix. For a connected graph, $\mathcal{L}$ is positive semi-definite, with a single zero eigenvalue corresponding to the eigenvector of ones, $\mathbf{1}_N$; we denote by $\lambda_2(\mathcal{L}) > 0$ the smallest positive eigenvalue (algebraic connectivity). We denote the Kronecker product by $\otimes$ and the Euclidean norm by $\|\cdot\|$. For a stack of vectors $\mathbf{x}_1, \dots, \mathbf{x}_N$, we denote the aggregate vector as $\mathbf{x} = \text{col}(\mathbf{x}_1, \dots, \mathbf{x}_N)$.

The agents engage in a non-cooperative game where each agent $i \in \mathcal{V}$ controls a local decision variable $\mathbf{x}_i \in \mathbb{R}^{n_i}$. The global decision vector is denoted by $\mathbf{x} = \text{col}(\mathbf{x}_1, \dots, \mathbf{x}_N) \in \mathbb{R}^n$, where $n = \sum_{i=1}^N n_i$. Each agent aims to minimize its local objective function $J_i(\mathbf{x}_i, \mathbf{x}_{-i})$, which depends on its own decision $\mathbf{x}_i$ and the decisions of other agents $\mathbf{x}_{-i}$. The optimization is subject to shared coupling constraints. Specifically, the problem for agent $i$ is formulated as:
\begin{subequations}\label{eq:cost_function} 
\begin{align}
    \min_{\mathbf{x}_i} \quad & J_i(\mathbf{x}_i, \mathbf{x}_{-i}) \\
    \text{s.t.} \quad & A\mathbf{x} = \mathbf{b},  \\
    & \mathbf{g}(\mathbf{x}) \leq \mathbf{0}, 
\end{align}
\end{subequations}
where $A \in \mathbb{R}^{m \times n}$ and $\mathbf{b} \in \mathbb{R}^m$ define shared affine equality constraints, and $\mathbf{g}: \mathbb{R}^n \to \mathbb{R}^p$ represents shared inequality constraints.

A strategy profile $\mathbf{x}^* = \text{col}(\mathbf{x}_1^*, \dots, \mathbf{x}_N^*)$ is a GNE if no agent can unilaterally reduce its cost while remaining feasible, given the decisions of all other agents. Formally, for every $i \in \mathcal{V}$,
\begin{equation}
    J_i(\mathbf{x}_i^*, \mathbf{x}_{-i}^*) \leq J_i(\mathbf{x}_i, \mathbf{x}_{-i}^*), \quad \forall\, \mathbf{x}_i \in \mathcal{X}_i(\mathbf{x}_{-i}^*),
    \label{eq:GNE_def}
\end{equation}
where the feasible set available to agent $i$, given the other agents' decisions, is
\begin{equation}
    \mathcal{X}_i(\mathbf{x}_{-i}) = \bigl\{\mathbf{x}_i \in \mathbb{R}^{n_i} : A\mathbf{x} = \mathbf{b},\; \mathbf{g}(\mathbf{x}) \leq \mathbf{0}\bigr\}\Big|_{\mathbf{x} = (\mathbf{x}_i, \mathbf{x}_{-i})},
    \label{eq:feasible_set}
\end{equation}
and $(\mathbf{x}_i, \mathbf{x}_{-i})$ denotes the global decision vector with the $i$-th block replaced by $\mathbf{x}_i$. The coupling in the constraints, where agent $i$'s feasible set $\mathcal{X}_i(\mathbf{x}_{-i})$ depends on the decisions of others, is what distinguishes the GNEP from a standard Nash equilibrium problem.

To guarantee the existence, uniqueness, and solvability of the GNEP, we employ the following standard assumptions regarding the network topology and the optimization structure.

\begin{assumption}[Graph Connectivity] \label{ass:connectivity}
    The communication graph $\mathcal{G}$ is undirected and connected. This ensures that information can propagate between any pair of agents, allowing for the distributed estimation of global variables.
\end{assumption}

\begin{assumption}[Strong Monotonicity] \label{ass:monotonicity}
    Define the pseudo-gradient mapping
    \begin{equation}\label{eq:ps_grad}
        \mathbf{F}(\mathbf{x}) := \text{col}(\gradv{\mathbf{x}_1} J_1, \dots, \gradv{\mathbf{x}_N} J_N).
    \end{equation}
    $\mathbf{F}(\mathbf{x})$ is strongly monotone with respect to $\mathbf{x}$, that is, there exists a constant $m_F > 0$ such that for all $\mathbf{x}, \mathbf{y} \in \mathbb{R}^n$:
    \begin{equation}
        (\mathbf{F}(\mathbf{x}) - \mathbf{F}(\mathbf{y}))^\top (\mathbf{x} - \mathbf{y}) \geq m_F \|\mathbf{x} - \mathbf{y}\|^2.
    \end{equation}
\end{assumption}

\begin{assumption}[Smoothness and Constraint Convexity] \label{ass:smoothness}
    The cost functions $J_i$ and constraint functions $\mathbf{g}$ are twice continuously differentiable, the equality constraint matrix $A$ has full row rank, and each component $g_j$ of $\mathbf{g}$ is convex.
\end{assumption}

\begin{assumption}[Generalized Slater's Condition] \label{ass:slater}
    There exists a strictly feasible point $\bar{\mathbf{x}} \in \mathbb{R}^n$ satisfying:
    \begin{equation}
        A\bar{\mathbf{x}} = \mathbf{b}, \quad \mathbf{g}(\bar{\mathbf{x}}) < \mathbf{0},
    \end{equation}
    where the inequality holds component-wise. This ensures that the Karush-Kuhn-Tucker (KKT) conditions are necessary and sufficient for the v-GNE, and that strong duality holds~\cite{facchinei_generalized_2010}. We also assume that the shared feasible set $\{\bx : A\bx = \mathbf{b},\ \mathbf{g}(\bx) \leq \mathbf{0}\}$ is bounded.
\end{assumption}

The strong monotonicity assumption ensures the existence and uniqueness of the v-GNE~\cite{facchinei_generalized_2010}, and implies global convergence. Under these assumptions, we focus on finding a v-GNE, a refinement of the GNE concept in which all agents share identical dual multipliers for the coupling constraints. A point $\mathbf{x}^*$ is a v-GNE if it solves the variational inequality $\text{VI}(\mathbf{F}, \mathcal{X})$, that is, $\langle \mathbf{F}(\mathbf{x}^*), \mathbf{x} - \mathbf{x}^* \rangle \geq 0$ for all $\mathbf{x} \in \mathcal{X}$, where $\mathcal{X} = \{\mathbf{x} : A\mathbf{x} = \mathbf{b}, \mathbf{g}(\mathbf{x}) \leq \mathbf{0}\}$ is the shared feasible set and $\mathbf{F}$ is the pseudo-gradient mapping. Under Slater's condition, the v-GNE is characterized by the KKT conditions with shared multipliers~\cite{facchinei_generalized_2010}. Let $\blambda \in \mathbb{R}^p$ and $\bmu \in \mathbb{R}^m$ denote the dual multipliers associated with the inequality and equality constraints, respectively. A point $\mathbf{z}^* = \text{col}(\mathbf{x}^*, \blambda^*, \bmu^*)$ is a primal-dual solution if it satisfies:
\begin{subequations}\label{eq:KKT_conditions}
\begin{align}
    &\mathbf{F}(\mathbf{x}^*) + \nabla \mathbf{g}(\mathbf{x}^*)^\top \blambda^* + A^\top \bmu^* = \mathbf{0},  \\
    &A\mathbf{x}^* - \mathbf{b} = \mathbf{0},  \\
    &\mathbf{g}(\mathbf{x}^*)\le \mathbf{0},\ \boldsymbol{\lambda}^*\ge \mathbf{0},\ \langle\boldsymbol{\lambda}^*, \mathbf{g}(\mathbf{x}^*)\rangle=0. \label{eq:KKT_feas_comp}
\end{align}
\end{subequations}
This distinguishes the v-GNE from the broader class of GNEs, in which each agent may hold different dual multipliers that satisfy only local optimality conditions.


\section{Centralized GNE Seeking}
\label{sec:centralized}

This section develops a centralized prescribed-time algorithm for the GNEP, where a single entity has access to the full primal-dual state. The result serves two purposes: it provides the theoretical template that the distributed architecture of Sec.~\ref{sec:algorithm} realizes, and it instantiates the OLF framework of~\cite{mudrik_optimization_2025-1} in the uHGD realization required for distributed implementation.

\subsection{Stationarity Mapping and the Optimization Lyapunov Function}
\label{subsec:olf}

We adopt the control-centric optimization framework introduced in~\cite{mudrik_optimization_2025-1}. Rather than proposing update dynamics and analyzing their convergence post hoc, this approach treats the augmented primal-dual state as a plant with single integrator dynamics
\begin{equation}
    \dot{\bz} = \mathbf{u}, \label{eq:plant}
\end{equation}
where $\mathbf{u}$ is a feedback law to be designed. We define the global augmented state $\bz = \text{col}(\bx, \blambda, \bmu) \in \mathbb{R}^{n+p+m}$. 
An integrator structure is, in fact, necessary for the convergence of first-order algorithms~\cite{scherer_convex_2021}.
The KKT conditions from~\eqref{eq:KKT_conditions} are encoded into a single stationarity vector $\mathbf{S}(\bz) \in \mathbb{R}^{n+p+m}$, defined as:
\begin{equation}
    \mathbf{S}(\bz) = \begin{bmatrix}
        \mathbf{S}_1(\bz) \\
        \mathbf{S}_2(\bz) \\
        \mathbf{S}_3(\bz)
    \end{bmatrix} = \begin{bmatrix}
        \mathbf{F}(\bx) + \nabla \mathbf{g}(\bx)^\top \widetilde{\blambda} + A^\top \bmu \\
        A\bx - \mathbf{b} \\
        \Phi_{\epsilon}(\blambda, -\mathbf{g}(\bx))
    \end{bmatrix},
    \label{eq:stationarity_vector}
\end{equation}
where $\mathbf{S}_1$ captures stationarity, $\mathbf{S}_2$ captures the conditions related to the equality constraints, and $\mathbf{S}_3$ captures the conditions related to the inequality constraints.
The term $\Phi_{\epsilon}(\blambda, -\mathbf{g}(\bx))$ represents the smoothed Fischer-Burmeister function applied element-wise~\cite{fischer_special_1992, liao-mcpherson_regularized_2019}:
\begin{equation}
    \Phi_{\epsilon}(a, b) = \sqrt{a^2 + b^2 + \epsilon^2} - (a + b),
    \label{eq:FB_function}
\end{equation}
with smoothing parameter $\epsilon > 0$. The smoothed FB function satisfies $\Phi_\epsilon(a,b) = 0$ if and only if $a \geq 0$, $b \geq 0$, and $ab = \epsilon^2/2$. 
Consequently, $\mathbf{S}(\bz) = 0$ encodes the $\epsilon$-approximate KKT conditions: stationarity (first block), equality conditions (second block), and relaxed inequality conditions $\blambda_j g_j(\bx) = -\epsilon^2/2$ that recover exactly as $\epsilon \to 0$.

In the stationarity block $\mathbf{S}_1$, the raw multiplier is replaced by the smoothed positive part~\cite{mudrik_optimization_2025-1}
\begin{equation}
    \widetilde{\lambda}_j = \tfrac{1}{2}\!\left(\sqrt{\lambda_j^2 + \epsilon^2} + \lambda_j\right), \qquad j = 1,\ldots,p,
    \label{eq:lambda_tilde}
\end{equation}
a smooth approximation of $\max(\lambda_j,0)$ with $\widetilde{\lambda}_j > 0$ for every $\lambda_j \in \mathbb{R}$; the complementarity block $\mathbf{S}_3$ retains the raw $\blambda$. Because $\widetilde{\lambda}_j > 0$ and each $g_j$ is convex, the curvature term $\sum_j \widetilde{\lambda}_j \nabla^2 g_j(\bx) \succeq 0$ holds unconditionally, so strong monotonicity (Assumption~\ref{ass:monotonicity}) alone makes the generalized Lagrangian Hessian positive definite. The displacement of the resulting equilibrium from the exact KKT point is $O(\epsilon)$~\cite{mudrik_optimization_2025-1}.

We define the OLF for the plant~\eqref{eq:plant} as the squared norm of the stationarity vector:
\begin{equation}
    V(\bz) = \frac{1}{2} \|\mathbf{S}(\bz)\|^2.
    \label{eq:OLF}
\end{equation}
The optimization design objective is to choose $\mathbf{u}$ such that the OLF decays to zero according to a user-prescribed profile. Because constraint satisfaction is enforced by driving $\mathbf{S}$ to zero rather than by projecting onto the feasible set, the resulting dynamics are entirely projection-free. During the transient phase $t < T$, constraint violation is bounded on the sublevel set $\Omega_{V(0)} = \{\bz : V(\bz) \leq V(\bz(0))\}$; at $t = T$, $\mathbf{S}(\bz) = 0$ enforces $A\bx = \mathbf{b}$ exactly and the inequality constraints to within $O(\epsilon)$.

\subsection{Sublevel Set Compactness}
\label{subsec:compactness}

A key requirement for the convergence analysis is that trajectories remain bounded. In standard Lyapunov-based stability analyses, this is ensured by assuming radial unboundedness of the Lyapunov function, so that sublevel sets are compact. The OLF $V = \tfrac{1}{2}\|\mathbf{S}\|^2$ is not radially unbounded for GNEPs with nonlinear inequality constraints, because a multiplier $\lambda_j$ can diverge while the primal variable approaches a point at which the constraint gradient $\nabla g_j$ vanishes (an unconstrained minimizer of the convex function $g_j$), so that the term $\widetilde{\lambda}_j \nabla g_j(\bx)$ in the stationarity residual stays bounded and $V$ remains finite. Compactness is instead secured below a finite threshold $c^*$, as recorded in the following result.

\begin{proposition}[Sublevel Set Compactness]
\label{prop:sufficient_radial}
    Under Assumptions~\ref{ass:monotonicity}--\ref{ass:slater}, there exists a threshold $c^* > 0$ such that for all $0 < c < c^*$, the sublevel set $$\Omega_c = \{\bz \in \mathbb{R}^{n+p+m} : V(\bz) \leq c\}$$ is compact.
    Moreover, if $\mathbf{g}(\mathbf{x}) = \mathbf{C}\mathbf{x} - \mathbf{d}$ where $\mathbf{C} \in \mathbb{R}^{p \times n}$ has full row rank, then $c^* = +\infty$ and $\Omega_c$ is compact for all $c > 0$.
\end{proposition}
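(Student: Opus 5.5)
Since $V$ is continuous, every $\Omega_c$ is closed, so compactness reduces to boundedness. I will argue by contradiction. Suppose a sequence $\bz^k = (\bx^k,\blambda^k,\bmu^k)$ satisfies $V(\bz^k)\le c$ and $\|\bz^k\|\to\infty$. Then the three blocks obey $\|\mathbf{S}_1\|,\|\mathbf{S}_2\|,\|\mathbf{S}_3\|\le\sqrt{2c}$, and I will bound $\bx$, then $\blambda$, then $\bmu$ in that order.

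\textbf{Step 1: $\bx$ and the primal constraint residuals.} From $\|\mathbf{S}_3\|\le\sqrt{2c}$ and the elementary FB bound $\Phi_\epsilon(a,b)\ge c_0\,(\max(-a,0)+\max(-b,0))$, each $g_j(\bx)$ is bounded above by $O(\sqrt c)$, and each $\lambda_j$ is bounded below. From $\mathbf{S}_2$, $\|A\bx-\mathbf{b}\|\le\sqrt{2c}$. So $\bx$ lies in a perturbed feasible set $\{A\bx-\mathbf{b}=\mathbf{r},\ \mathbf{g}(\bx)\le \mathbf{s}\}$ with $\|\mathbf{r}\|,\|\mathbf{s}\|=O(\sqrt c)$.

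Because the shared feasible set is bounded and the constraint functions are convex and closed, the recession cone of this perturbed set is trivial. Hence it is bounded whenever it is nonempty and the perturbation is small. This is exactly where a threshold $c^*$ enters, as a perturbation size below which the perturbed sets stay bounded. I expect this to be the main obstacle for nonlinear $\mathbf{g}$: uniform boundedness under perturbation must come from the recession-cone argument, not from radial unboundedness of $V$.

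\textbf{Step 2: bound on $\blambda$ via Slater.} I take the inner product of $\mathbf{S}_1$ with $\bx-\bar\bx$, where $\bar\bx$ is the Slater point. Convexity gives $\nabla g_j(\bx)^\top(\bx-\bar\bx)\ge g_j(\bx)-g_j(\bar\bx)$, and $A(\bx-\bar\bx)=A\bx-\mathbf{b}$. Strong monotonicity of $\mathbf{F}$ together with boundedness of $\bx$ controls $\mathbf{F}(\bx)^\top(\bx-\bar\bx)$. This yields
\[
\textstyle\sum_j \widetilde\lambda_j\bigl(-g_j(\bar\bx)+g_j(\bx)\bigr)\le C+\|\bmu\|\,\sqrt{2c}.
\]
The $\mathbf{S}_3$ bound couples $\lambda_j$ and $g_j(\bx)$: large $\lambda_j$ forces $g_j(\bx)\approx0$ up to $O(\sqrt c+\epsilon^2/\lambda_j)$. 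For $c$ small, $\min_j(-g_j(\bar\bx))$ therefore dominates, giving $\|\widetilde\blambda\|\le C_1+C_2\sqrt c\,\|\bmu\|$.

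\textbf{Step 3: bound on $\bmu$ and closing the loop.} Since $A$ has full row rank,
\[
\|\bmu\|\le\sigma_{\min}(A)^{-1}\bigl(\sqrt{2c}+\|\mathbf{F}(\bx)\|+\|\nabla\mathbf{g}(\bx)\|\,\|\widetilde\blambda\|\bigr).
\]
Substituting the Step 2 bound gives $\|\bmu\|(1-K\sqrt c)\le C$. This is bounded once $c<c^*:=\min(c_1,1/K^2)$, where $c_1$ is the Step 1 perturbation threshold. Finally, $\widetilde\lambda_j$ bounded implies $\lambda_j$ bounded above, and $\lambda_j$ is bounded below by Step 1, which contradicts $\|\bz^k\|\to\infty$.

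\textbf{Affine case.} For $\mathbf{g}=\mathbf{C}\bx-\mathbf{d}$, every step becomes global. Dual variables are controlled directly: $\nabla\mathbf{g}=\mathbf{C}$ is constant, so $\mathbf{S}_1$ gives $\mathbf{C}^\top\widetilde\blambda+A^\top\bmu=\mathbf{S}_1-\mathbf{F}(\bx)$. If $\bx$ is bounded, then $\widetilde\blambda$ and $\bmu$ are bounded; I would assume joint full rank of $[\mathbf{C};A]$ here, interpreting the hypothesis accordingly, or otherwise use Step 2, which now holds without the smallness requirement.

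Boundedness of $\bx$ for all $c$ follows from strong monotonicity. Pairing $\mathbf{S}_1$ with $\bx-\bar\bx$ gives
\[
m_F\|\bx-\bar\bx\|^2\le(\|\mathbf{S}_1\|+\|\mathbf{F}(\bar\bx)\|)\|\bx-\bar\bx\|-\widetilde\blambda^\top\mathbf{C}(\bx-\bar\bx)-\bmu^\top(A\bx-\mathbf{b}).
\]
Here $\mathbf{C}(\bx-\bar\bx)=\mathbf{g}(\bx)-\mathbf{g}(\bar\bx)$, and the FB bounds make $-\widetilde\lambda_j g_j(\bx)$ bounded above. The remaining cross terms grow linearly and are absorbed by the quadratic term, so no threshold is needed and $c^*=+\infty$.
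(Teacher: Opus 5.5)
Your proof differs from the paper's, which gives no argument here: it defers compactness, the existence of $c^*$, and the affine case to the cited OLF reference. Its only comment is that compactness is lost because $\lambda_j$ can diverge where $\nabla g_j$ vanishes. Your self-contained argument is sound for nonlinear $\mathbf{g}$ and even yields an explicit, conservative threshold. The affine case, however, has a real gap, discussed in the second and third paragraphs.

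Two pieces of bookkeeping in the nonlinear case need fixing. First, no threshold enters in Step~1. Nonempty sublevel sets of a closed convex function all share one recession cone. So for $P_c=\{\bx:\|A\bx-\mathbf{b}\|\le\sqrt{2c},\ \mathbf{g}(\bx)\le\sqrt{2c}\,\mathbf{1}\}$ you get $\mathrm{rec}(P_c)=\ker A\cap\bigcap_j\mathrm{rec}\{g_j\le 0\}=\mathrm{rec}(\mathcal{X})=\{\mathbf{0}\}$ for every $c>0$. The obstruction is purely dual and sits in Step~2, where the paper's remark becomes quantitative. On $\Omega_c$, the condition $|\Phi_\epsilon(\lambda_j,-g_j(\bx))|\le\sqrt{2c}$ yields only $\widetilde\lambda_j(-g_j(\bx))\le\sqrt{2c}\,\widetilde\lambda_j+O(1+|g_j(\bx)|)$, and the coefficient $\sqrt{2c}$ is sharp as $\lambda_j\to\infty$. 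So the Slater margin $\beta_0=\min_j(-g_j(\bar{\bx}))$ dominates once $\sqrt{2c}<\beta_0$. Step~3 then closes once $G(c)\sqrt{2c}<\sigma_{\min}(A)(\beta_0-\sqrt{2c})$, with $G(c)=\sup_{\bx\in P_c}\|\nabla\mathbf{g}(\bx)\|$. Because your $K$ depends on $c$, define $c^*$ through this monotone condition rather than as $\min(c_1,1/K^2)$. Second, the inequality $\Phi_\epsilon(a,b)\ge c_0(\max(-a,0)+\max(-b,0))$ fails for $a,b>0$, where $\Phi_\epsilon$ can be negative. What is true, and suffices, is $\Phi_\epsilon(a,b)\ge\max(-a,0)+\max(-b,0)$ whenever $a<0$ or $b<0$.

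The affine case, as written, does not go through. In your strong-monotonicity pairing, the term $-\bmu^\top(A\bx-\mathbf{b})\le\sqrt{2c}\,\|\bmu\|$ is linear in $\|\bmu\|$, not in $\|\bx\|$, so $m_F\|\bx-\bar{\bx}\|^2$ cannot absorb it. Nor does the FB constraint bound $-\widetilde\lambda_j g_j(\bx)$ above: as $\lambda_j\to\infty$ it still permits $-g_j(\bx)$ up to about $\sqrt{2c}$. The best available estimate is the one above, and its $\sqrt{2c}\,\widetilde\lambda_j$ part is cancelled by $\widetilde{\blambda}^\top\mathbf{g}(\bar{\bx})\le-\beta_0\sum_j\widetilde\lambda_j$ only if $\sqrt{2c}<\beta_0$. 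That is exactly the threshold you are trying to remove. Bounding the duals by $\|\mathbf{F}(\bx)\|$ through $\mathbf{S}_1$ does not close the loop either, since $\mathbf{F}$ may grow faster than quadratically. The repair is to drop the pairing. By the corrected Step~1, $\bx$ is bounded on $\Omega_c$ for every $c$. Joint full row rank of $[\mathbf{C};A]$ then bounds $(\widetilde{\blambda},\bmu)$ via $\mathbf{S}_1$, and hence bounds $\blambda$.

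Keep that joint-rank hypothesis, and discard the fallback ``otherwise use Step~2, which now holds without the smallness requirement.'' The rank condition is necessary, and Step~2 needs smallness even for affine $\mathbf{g}$. Take $n=m=p=1$, $\mathbf{F}(x)=x$, $A=1$, $\mathbf{b}=0$, $g(x)=x-1$. Every hypothesis of the affine clause holds: $\bar{x}=0$, $\mathcal{X}=\{0\}$, and $\mathbf{C}=[1]$ has full row rank. Along $x=\tfrac12$, $\lambda=t\to\infty$, $\mu=-\widetilde\lambda-\tfrac12$, you get $\mathbf{S}_1=0$ and $\mathbf{S}_2=\tfrac12$. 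You also get $|\mathbf{S}_3|=|\sqrt{t^2+\tfrac14+\epsilon^2}-t-\tfrac12|<\tfrac12$. Hence $V<\tfrac14$ on an unbounded curve, and $\Omega_{1/4}$ is not compact. So the ``moreover'' part of the statement holds only under an extra condition such as linear independence of the rows of $\mathbf{C}$ and $A$, which is the reading you adopted. Note also that $\nabla g$ never vanishes in this example. Finiteness of $c^*$ can therefore come from alignment of $\nabla\mathbf{g}$ with the rows of $A$, which your Step~3 condition captures and the paper's remark does not.
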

The stationarity vector~\eqref{eq:stationarity_vector} with the smoothed multiplier~\eqref{eq:lambda_tilde} is the strongly monotone GNE stationarity vector of~\cite{mudrik_optimization_2025-1}, and Assumptions~\ref{ass:monotonicity}--\ref{ass:slater} supply the hypotheses (strong monotonicity, full row rank of $A$, convexity of $\mathbf{g}$, a bounded shared feasible set, and a Slater point) under which compactness, the existence of $c^*$, and the affine case $c^* = +\infty$ are established in~\cite{mudrik_optimization_2025-1}.
The threshold $c^*$ depends only on the inequality constraints and the feasibility margin, and is independent of the cost functions, communication graph, or algorithm gains. For affine constraints the constraint gradient never vanishes, so $c^* = +\infty$.

\begin{lemma}[Non-Singularity of $\nabla\mathbf{S}$]
\label{lem:nondeg}
Under Assumptions~\ref{ass:monotonicity} and~\ref{ass:smoothness}, for any $\epsilon > 0$, the Jacobian $\nabla\mathbf{S}(\bz)$ is non-singular for every $\bz \in \Omega_c$. Consequently, the uniform lower bound
\begin{equation}
    \underline{\sigma} \triangleq \min_{\bz \in \Omega_c} \sigma_{\min}(\nabla\mathbf{S}(\bz)) > 0
    \label{eq:sigma_lb}
\end{equation}
holds, where compactness of $\Omega_c$ is guaranteed by Proposition~\ref{prop:sufficient_radial}.
\end{lemma}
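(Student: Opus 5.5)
The plan is to compute the block structure of $\nabla\mathbf{S}(\bz)$ explicitly, show that its kernel is trivial at every point, and then get the uniform bound \eqref{eq:sigma_lb} from continuity and compactness. Write $G(\bx) := \nabla\mathbf{g}(\bx) \in \mathbb{R}^{p\times n}$ and $H(\bz) := \nabla\mathbf{F}(\bx) + \sum_j \widetilde{\lambda}_j \nabla^2 g_j(\bx)$.

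\emph{Diagonal factors.} Differentiating \eqref{eq:lambda_tilde} gives $\partial\widetilde{\lambda}_j/\partial\lambda_j = \tfrac12\bigl(\lambda_j/\sqrt{\lambda_j^2+\epsilon^2}+1\bigr) \in (0,1)$. Collect these in $D = \mathrm{diag}(\cdot) \succ 0$.

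For the FB block, set $r_j = \sqrt{\lambda_j^2+g_j^2+\epsilon^2}$. Then $\partial\Phi_\epsilon/\partial a = \lambda_j/r_j - 1$ and $\partial\Phi_\epsilon/\partial b = -g_j/r_j - 1$. Because $\epsilon>0$ we have $|\lambda_j|<r_j$ and $|g_j|<r_j$, so both partials are strictly negative. Define
\[
D_a = \mathrm{diag}(1-\lambda_j/r_j) \succ 0, \qquad D_b = \mathrm{diag}(1+g_j/r_j) \succ 0 .
\]
Using the chain rule through $b=-\mathbf{g}(\bx)$, the Jacobian takes the form
\[
\nabla\mathbf{S} = \begin{bmatrix} H & G^\top D & A^\top \\ A & 0 & 0 \\ D_b G & -D_a & 0 \end{bmatrix}.
\]

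\emph{Trivial kernel.} Suppose $\nabla\mathbf{S}\,\mathrm{col}(\mathbf{u},\mathbf{w},\mathbf{y}) = 0$. The second row gives $A\mathbf{u}=0$. The third row gives $\mathbf{w} = D_a^{-1} D_b G\mathbf{u}$. Substituting into the first row and multiplying by $\mathbf{u}^\top$, the $A^\top\mathbf{y}$ term drops out since $\mathbf{u}^\top A^\top\mathbf{y} = (A\mathbf{u})^\top\mathbf{y}=0$. This leaves
\[
\mathbf{u}^\top H\mathbf{u} + \mathbf{u}^\top G^\top D D_a^{-1} D_b G\,\mathbf{u} = 0 .
\]
The second term is nonnegative because $D D_a^{-1} D_b$ is a positive diagonal matrix. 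For the first term, strong monotonicity (Assumption~\ref{ass:monotonicity}) together with $C^2$ smoothness gives $\mathbf{u}^\top\nabla\mathbf{F}(\bx)\mathbf{u}\ge m_F\|\mathbf{u}\|^2$; this follows by differentiating the monotonicity inequality along $\mathbf{y}=\bx+t\mathbf{u}$ as $t\to0$. In addition, $\sum_j\widetilde{\lambda}_j\nabla^2 g_j \succeq 0$ because $\widetilde{\lambda}_j>0$ and each $g_j$ is convex. Hence $m_F\|\mathbf{u}\|^2 \le 0$, so $\mathbf{u}=0$. It follows that $\mathbf{w}=0$, and then $A^\top\mathbf{y}=0$ forces $\mathbf{y}=0$ by the full row rank of $A$ (Assumption~\ref{ass:smoothness}). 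Thus $\nabla\mathbf{S}(\bz)$ is non-singular for every $\bz$, not only on $\Omega_c$.

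\emph{Uniform bound.} By Assumption~\ref{ass:smoothness}, $\nabla\mathbf{S}$ is continuous in $\bz$; the square roots are smooth since $\epsilon>0$. The map $\bz\mapsto\sigma_{\min}(\nabla\mathbf{S}(\bz))$ is continuous, because singular values are Lipschitz in the matrix entries. It is strictly positive pointwise, and $\Omega_c$ is compact by Proposition~\ref{prop:sufficient_radial}. Therefore the minimum in \eqref{eq:sigma_lb} is attained and is positive.

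The main obstacle is the sign bookkeeping in the FB block. The argument depends on $D_a$ and $D_b$ being \emph{strictly} positive so that $D_a$ is invertible and the cross term is nonnegative. This strictness is exactly where $\epsilon>0$ enters: at $\epsilon=0$ these factors can vanish at degenerate complementarity points.
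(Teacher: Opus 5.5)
Your proposal is correct and takes the same approach as the paper. Both compute the block Jacobian and show its kernel is trivial, using strong monotonicity, convexity of $g_j$ with $\widetilde{\lambda}_j>0$, the strict sign of the smoothed Fischer--Burmeister partials for $\epsilon>0$, and full row rank of $A$, and then obtain the uniform bound from continuity of $\sigma_{\min}$ on the compact set $\Omega_c$. The paper only asserts that block elimination works and cites the reference for it, whereas you actually carry it out (solving the third block row for $\mathbf{w}$, testing the first row against $\mathbf{u}$) and derive $\mathbf{u}^\top\nabla\mathbf{F}\,\mathbf{u}\ge m_F\|\mathbf{u}\|^2$ from strong monotonicity.
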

The proof is deferred to Appendix~\ref{app:nondeg}.

\subsection{Centralized Prescribed-Time Convergence}
\label{subsec:centralized_convergence}

In the centralized setting of~\cite{mudrik_optimization_2025-1}, the plant is the integrator $\dot{\bz} = \mathbf{u}$ of~\eqref{eq:plant}, and three feedback realizations $\mathbf{u}$ are developed: the Hessian-gradient dynamics (HGD), Newton dynamics (ND), and gradient dynamics (GD). The HGD set
\begin{equation}\label{eq:HGD}
    \mathbf{u} = -\sigma(V,t)\,\frac{\nabla V}{\|\nabla V\|^2},
\end{equation}
enforcing $\dot{V} = -\sigma(V,t)$ with equality, and the ND invert the full Jacobian $\nabla \mathbf{S}(\bz)^{-1}$. Both converge to the unique zero of $\mathbf{S}$, but neither feedback is block-separable, even when each agent has access to the full state: the HGD normalization by the global scalar $\|\nabla V\|^2$ couples all blocks nonlinearly, so the per-agent updates do not recombine into the centralized law, and the ND Jacobian inverse mixes all blocks structurally. Neither therefore decomposes into the per-agent computations required for distributed deployment.

We instead use the uHGD feedback, obtained by omitting the $\|\nabla V\|^{-2}$ normalization in~\eqref{eq:HGD}, which sets the feedback law
\begin{equation}
    \mathbf{u} = -\sigma_{opt}(t)\, \nabla V(\bz),
    \label{eq:centralized_GD}
\end{equation}
with
\begin{equation}
    \sigma_{opt}(t) = \frac{\mu_c}{T - t},
    \label{eq:sigma_opt}
\end{equation}
and $\mu_c > 0$. 
This structure can be decomposed, as the $i$-th block of this feedback law, 
\begin{equation}
    \mathbf{u}_i = -\sigma_{opt}(t)\, \nabla_{\bz_i} V,
\end{equation}
depends only on local partial derivatives of $V$ that each agent can evaluate from its state estimates. This makes the uHGD feedback the natural choice for distributed deployment, since the normalized HGD and the ND do not decompose in this way. The price is that the Lyapunov decay is now an inequality $\dot{V} \leq -\sigma(V,t)$ rather than an equality; the resulting convergence rate is quantified in Proposition~\ref{prop:centralized_PT}. Removing the normalization means the gradient-flow dissipation $\dot{V} = -\sigma_{opt}(t)\|\nabla V\|^2$ no longer delivers a decay rate on $V$ directly, as it does for the HGD; the Polyak-{\L}ojasiewicz (PL) inequality $\|\nabla V\|^2 \geq 2\underline{\sigma}^2 V$, which follows from the singular-value bound of Lemma~\ref{lem:nondeg}, is precisely what converts this dissipation into the prescribed-time rate of Proposition~\ref{prop:centralized_PT}, and the same mechanism underlies the distributed analysis.

\begin{proposition}[Centralized Prescribed-Time GNEPs]
\label{prop:centralized_PT}
Under Assumptions~\ref{ass:monotonicity}--\ref{ass:slater}, consider the uHGD feedback~\eqref{eq:centralized_GD} applied to the plant~\eqref{eq:plant}. If $V(\bz(0)) < c^*$, then:
\begin{enumerate}
    \item[(i)] All trajectories remain bounded: $\bz(t) \in \Omega_{V(0)}$ for all $t \in [0, T)$.
    \item[(ii)] The OLF satisfies
    \begin{equation}
        V(\bz(t)) \leq V(\bz(0)) \left(\frac{T-t}{T}\right)^\gamma, \quad \gamma = 2 \underline{\sigma}^2 \mu_c,
        \label{eq:centralized_decay}
    \end{equation}
    where $\underline{\sigma} = \min_{\bz \in \Omega_{V(0)}} \sigma_{\min}(\nabla \mathbf{S}(\bz)) > 0$.
    \item[(iii)] Consequently, $\lim_{t \to T^-} V(\bz(t)) = 0$, and at time $t = T$ the state $\bz(t)$ reaches the unique zero of the $\epsilon$-smoothed stationarity vector.
\end{enumerate}
\end{proposition}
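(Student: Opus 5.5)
We work directly with the closed loop $\dot{\bz} = -\sigma_{opt}(t)\nabla V(\bz)$ and differentiate the OLF along it, using the chain rule $\nabla V(\bz) = \nabla\mathbf{S}(\bz)^\top \mathbf{S}(\bz)$. This gives
\begin{equation*}
\dot V = \nabla V^\top \dot{\bz} = -\sigma_{opt}(t)\,\|\nabla\mathbf{S}(\bz)^\top\mathbf{S}(\bz)\|^2 \le 0
\end{equation*}
on $[0,T)$, since $\sigma_{opt}(t) = \mu_c/(T-t) > 0$ there. Local existence of a solution follows from Assumption~\ref{ass:smoothness}: $\mathbf{S}$ is $C^1$ (the smoothed FB function and $\widetilde{\blambda}$ are smooth for $\epsilon>0$), so $\nabla V$ is locally Lipschitz, and the time-varying gain is continuous on $[0,T)$.

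For part (i), monotonicity of $V$ means that, as long as the solution exists, $V(\bz(t)) \le V(\bz(0)) < c^*$, so $\bz(t) \in \Omega_{V(0)}$. By Proposition~\ref{prop:sufficient_radial}, $\Omega_{V(0)}$ is compact, since $V(0) < c^*$. (If $V(0)=0$ the claims are trivial, because $\bz(0)$ is an equilibrium.) A maximal solution that remains in a compact set on every $[0,t_1]$ with $t_1<T$ cannot escape in finite time. Hence it extends to all of $[0,T)$, which proves (i).

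For part (ii), Lemma~\ref{lem:nondeg} applied on $\Omega_{V(0)}$ gives $\sigma_{\min}(\nabla\mathbf{S}(\bz)) \ge \underline{\sigma} > 0$. Because $\nabla\mathbf{S}$ is square, $\|\nabla\mathbf{S}^\top \mathbf{S}\| \ge \underline{\sigma}\|\mathbf{S}\|$, which yields the PL inequality $\|\nabla V\|^2 \ge 2\underline{\sigma}^2 V$. Substituting this into the derivative above gives
\begin{equation*}
\dot V \le -\frac{2\underline{\sigma}^2\mu_c}{T-t}\,V = -\frac{\gamma}{T-t}V.
\end{equation*}
By the comparison lemma (equivalently, $\frac{d}{dt}\bigl[V\,(T-t)^{-\gamma}\bigr] \le 0$), we get $V(\bz(t)) \le V(\bz(0))\bigl((T-t)/T\bigr)^\gamma$, which is \eqref{eq:centralized_decay}.

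For part (iii), letting $t\to T^-$ in \eqref{eq:centralized_decay} gives $V\to 0$, since $\gamma>0$. The main obstacle is that the vector field blows up as $t \to T$, so we must show that $\bz(t)$ actually has a limit at $T$ and then identify it. The plan for the limit is to bound the path length:
\begin{equation*}
\|\dot{\bz}\| = \frac{\mu_c}{T-t}\|\nabla\mathbf{S}^\top\mathbf{S}\| \le \frac{\mu_c \bar{\sigma}\sqrt{2V(0)}}{T}\left(\frac{T-t}{T}\right)^{\gamma/2-1},
\end{equation*}
where $\bar{\sigma}$ is the maximum of $\|\nabla\mathbf{S}\|$ over the compact set $\Omega_{V(0)}$. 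This bound is integrable on $[0,T)$ whenever $\gamma>0$. Therefore $\bz(t)$ is Cauchy and converges to some $\bz(T)\in\Omega_{V(0)}$, where the membership uses closedness of the sublevel set. Defining $\bz(T)$ as this limit, continuity of $V$ gives $V(\bz(T))=0$, i.e., $\mathbf{S}(\bz(T))=0$.

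Uniqueness of the zero follows from the discussion after \eqref{eq:lambda_tilde}. Strong monotonicity together with $\widetilde{\lambda}_j>0$ and convexity of the $g_j$ gives a unique $\epsilon$-smoothed KKT point, as established in \cite{mudrik_optimization_2025-1}. Alternatively, non-singularity of $\nabla\mathbf{S}$ on the compact set $\Omega_{V(0)}$ gives isolated zeros, and the monotone structure rules out multiple ones. Hence $\bz(T)$ is the unique zero, which completes (iii).
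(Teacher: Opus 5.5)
Your proof is correct and follows essentially the same route as the paper: the PL inequality $\|\nabla V\|^2 \ge 2\underline{\sigma}^2 V$ obtained from Lemma~\ref{lem:nondeg} on the compact set $\Omega_{V(0)}$, integration of $\dot V \le -\gamma V/(T-t)$, and a finite arc-length bound $\int_0^T (T-t)^{\gamma/2-1}\,dt<\infty$ to get the limit at $T$. Two of your steps are clean refinements of the paper: you get confinement directly from $\dot V = -\sigma_{opt}\|\nabla V\|^2 \le 0$ instead of the paper's contradiction argument, and you explicitly continue the solution to $[0,T)$; note, however, that under Assumption~\ref{ass:smoothness} $\nabla V$ is only guaranteed continuous, not locally Lipschitz, so local existence should be credited to Peano's theorem, which suffices because nothing in the argument uses uniqueness of trajectories.
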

The proof is deferred to Appendix~\ref{app:centralized_PT}.
The challenge addressed in the remainder of this paper is to realize the centralized gradient feedback~\eqref{eq:centralized_GD} through a fully distributed architecture, where no single agent has access to the full state $\bz$.

\section{Distributed Control Architecture}
\label{sec:algorithm}

In this section, we design a distributed feedback controller that solves the GNEP~\eqref{eq:cost_function} in prescribed time $T > 0$. Guided by the centralized result of Sec.~\ref{sec:centralized}, we realize the centralized feedback~\eqref{eq:centralized_GD} through a simultaneous architecture where agents optimize their local decision variables while concurrently estimating the global state required for gradient computation.

To handle the coupled constraints and the non-separable cost functions, each agent must estimate the full primal-dual state of the network. We define the local augmented state for agent $i$ as:
\begin{equation}
    \bz_i(t) = \text{col}(\bx_i(t), \blambda_i(t), \bmu_i(t)) \in \mathbb{R}^{m_i},
    \label{eq:local_state}
\end{equation}
where $\bx_i \in \mathbb{R}^{n_i}$ is agent $i$'s decision variable, and $\blambda_i \in \mathbb{R}^p$, $\bmu_i \in \mathbb{R}^m$ are agent $i$'s local copies of the shared dual multipliers for the inequality and equality constraints, respectively. The total local dimension is $m_i = n_i + p + m$.

Additionally, each agent $i$ maintains an estimate of the entire network's augmented state. To distinguish this cyber-layer estimate from the physical local variables, we denote the estimate vector by $\by_i(t)$:
\begin{equation}
    \by_i(t) = \text{col}(\by_{i1}(t), \dots, \by_{iN}(t)) \in \mathbb{R}^{\bar{m}},
    \label{eq:estimated_state}
\end{equation}
where $\by_{ij}(t) \in \mathbb{R}^{m_j}$ represents agent $i$'s estimate of agent $j$'s local augmented state $\bz_j(t)$. The total dimension is $\bar{m} = \sum_{j=1}^N m_j$.

\subsection{Prescribed-Time Distributed State Observer}
\label{subsec:consensus}
The algorithm does not require choosing a leader a priori. Instead, each agent $i$ runs a distributed prescribed-time consensus-based observer that estimates the states $\by_{ij}$ of all agents $j$, while pinning its own estimate to its true state, $\by_{ii} = \bz_i$. Each agent therefore acts as the anchor for its own block and as a follower for every other block. The gain structure follows the prescribed-time time-scaling framework of~\cite{song_time-varying_2017, wang_prescribed-time_2019}, and the pinning follows the asymptotic observer of~\cite{zou_continuous-time_2021}; their combination when the anchored states are driven by the optimization dynamics is a contribution of this work, with the formal guarantee provided by Theorem~\ref{thm:main}. For each target agent $j \in \mathcal{V}$, agent $i$ updates its estimate $\by_{ij}$ according to:
\begin{equation}
    \dot{\by}_{ij}(t) = -\left(k_o + c_o\, \mu_o(t)\right) \sum_{k=1}^N a_{ik} (\by_{ij}(t) - \by_{kj}(t)),
    \label{eq:consensus_dynamics}
\end{equation}
for all $i \neq j$. Here, $k_o, c_o > 0$ are tuning gains and $\mu_o(t)$ is the time-scaling function
\begin{equation}
    \mu_o(t) = \left(\frac{T}{T - t}\right)^{\gamma_c}, \quad \gamma_c > 1,
    \label{eq:consensus_scaling}
\end{equation}
which grows unbounded as $t \to T$. The effective consensus gain in~\eqref{eq:consensus_dynamics} is therefore $\xi(t) = k_o + c_o\, \mu_o(t)$, which diverges at the polynomial order $\gamma_c > 1$. This order exceeds that of the optimization gain~\eqref{eq:opt_scaling}, so the observer behaves as a fast inner loop relative to the optimization trajectory it tracks (Sec.~\ref{sec:analysis}).

The consensus dynamics~\eqref{eq:consensus_dynamics} alone implement a leaderless consensus: for each target agent~$j$, the estimates $\{\by_{ij}\}_{i=1}^N$ would converge to a common value determined by the initial conditions $\{\by_{ij}(0)\}_{i=1}^N$, which generally differs from the true state~$\bz_j(t)$. To anchor the estimates to the correct value, agent~$j$ pins its own estimate to its current state~\cite{zou_continuous-time_2021}:
\begin{equation}
    \by_{jj}(t) \equiv \bz_j(t), \quad \forall\, j \in \mathcal{V},\; \forall\, t \in [0, T).
    \label{eq:pinning}
\end{equation}
Since agent~$j$ has direct access to~$\bz_j(t)$, no estimation is needed for its own state.

The pinning condition~\eqref{eq:pinning} induces $N$ parallel tracking problems: for the $j$-th block of the global state vector, agent $j$ is the anchor while all other agents track its state via~\eqref{eq:consensus_dynamics}. This creates a bidirectional coupling: the optimization dynamics drive the anchored state~$\bz_j(t)$, while the consensus protocol propagates this change to the tracking agents, with the prescribed-time gain ensuring that tracking errors vanish at~$T$. The gain dominance condition $c_o > c_o^*$ in Theorem~\ref{thm:main} is precisely what ensures the consensus layer absorbs this optimization-induced excitation.

\subsection{Optimization Feedback Law with Dual Consensus}
\label{subsec:opt_layer}

Each agent's local augmented state evolves as
\begin{equation}
    \label{eq:z_i}
    \dot{\bz}_i = \mathbf{u}_i,
\end{equation}
where $\mathbf{u}_i = \text{col}(\mathbf{u}_i^x, \mathbf{u}_i^\lambda, \mathbf{u}_i^\mu)$ is the distributed feedback law designed so that all agents reach the v-GNE at the prescribed time~$T$. The three components of~$\mathbf{u}_i$ are defined as follows.

\textbf{Primal Feedback:} Agent $i$ updates its decision variable
$\bx_i$ via gradient flow on the OLF:
\begin{equation}
    \mathbf{u}_i^x(t) = -\sigma_{opt}(t) \gradv{\bx_i} V(\by_i(t)).
    \label{eq:primal_dynamics}
\end{equation}
The gradient is evaluated at the consensus
estimate~$\by_i(t)$ from~\eqref{eq:consensus_dynamics}, not the true
network state; the prescribed-time consensus layer ensures this
substitution becomes exact at~$T$.

\textbf{Dual Updates with Consensus:} Agent $i$ updates its dual
variables $\blambda_i$ and $\bmu_i$ using both gradient descent and
consensus with neighboring agents:
\begin{align}
    \mathbf{u}_i^\lambda(t) = -N\,&\sigma_{opt}(t) \gradv{\blambda_i}
    V(\by_i(t)) \notag\\
    &- \kappa(t) \sum_{k=1}^N a_{ik} (\blambda_i(t) - \blambda_k(t)),
    \label{eq:dual_lambda_dynamics} \\
    \mathbf{u}_i^\mu(t) = -N\,&\sigma_{opt}(t) \gradv{\bmu_i}
    V(\by_i(t)) \notag\\
    &- \kappa(t) \sum_{k=1}^N a_{ik} (\bmu_i(t) - \bmu_k(t)),
    \label{eq:dual_mu_dynamics}
\end{align}
where $\sigma_{opt}(t)$ and $\kappa(t)$ are time-varying gains defined below in~\eqref{eq:opt_scaling}--\eqref{eq:dual_consensus_scaling}.
The consensus terms in \eqref{eq:dual_lambda_dynamics}--\eqref{eq:dual_mu_dynamics} force $\blambda_i \to \blambda_j$ and $\bmu_i \to \bmu_j$ for all $i, j$, ensuring the shared dual multiplier requirement of the v-GNE; without them, each agent's dual variables would converge to different values satisfying only local KKT conditions.

\begin{remark}[Dual scaling factor]
\label{rem:N_factor}
The factor $N$ in the dual gradient terms restores the aggregate dual descent to the centralized rate. The multipliers enter the centralized $\mathbf{S}$ as a single shared $(\blambda, \bmu)$, identified in the distributed setting with the averages $\bar{\blambda} = \frac{1}{N}\sum_i \blambda_i$ and $\bar{\bmu} = \frac{1}{N}\sum_i \bmu_i$. In the stationarity block $\mathbf{S}_1$ they enter only through these averages, so by the chain rule $\partial \mathbf{S}_1/\partial \blambda_i = \frac{1}{N}\,\partial \mathbf{S}_1/\partial \bar{\blambda}$ (and likewise for $\bmu_i$), and the explicit $N$ cancels this $\frac{1}{N}$ exactly. The equality multiplier $\bmu$ enters $V$ only through $\mathbf{S}_1$, so its rescaling is exact; the inequality multiplier also enters the complementarity block $\mathbf{S}_3 = \Phi_\epsilon(\blambda, -\mathbf{g}(\bx))$ through each agent's own copy $\blambda_i$, a contribution that the scalar $N$ does not reconcile but that forms a per-agent residual vanishing as $\mathbf{S}_3 \to \mathbf{0}$ at the deadline. The consensus terms in~\eqref{eq:dual_lambda_dynamics}--\eqref{eq:dual_mu_dynamics} drop out of the average since $\mathbf{1}^\top\mathcal{L} = \mathbf{0}$ and vanish once the copies agree, so the aggregate descent $\frac{d\bar{\blambda}}{dt} = \frac{1}{N}\sum_i \dot{\blambda}_i$ recovers the centralized dual flow of Proposition~\ref{prop:centralized_PT} in the limit.
\end{remark}

The optimization and dual consensus gains contain two tuning parameters, $\mu_c$ and $k_d$, embedded in time-varying gains that grow unbounded as $t \to T$. Together with the consensus layer parameter~$c_o$ from~\eqref{eq:consensus_scaling}, these three parameters govern the prescribed-time convergence.
The remaining gains are the optimization layer gain:
\begin{equation}
    \sigma_{opt}(t) = \frac{\mu_c}{T - t}, \quad \mu_c > 0,
    \label{eq:opt_scaling}
\end{equation}
and the dual consensus gain:
\begin{equation}
    \kappa(t) = k_d\, \mu_o(t) = k_d \left(\frac{T}{T-t}\right)^{\gamma_c}, \quad k_d > 0.
    \label{eq:dual_consensus_scaling}
\end{equation}
The dual consensus gain shares the time-scaling function $\mu_o$ of the observer layer, so it diverges at the same order $\gamma_c$ and drives the per-agent multipliers to agreement strictly faster than the optimization layer evolves; this multi-rate separation is what the ensuing dissipation analysis exploits.

The observer and dual consensus gains both diverge at order $\gamma_c > 1$, strictly faster than the order-one optimization gain, so that estimation and dual-disagreement errors decay faster than the optimization dynamics can excite them. The constants $c_o$ and $k_d$ must additionally exceed finite thresholds to dominate the coupling over the transient.

\subsection{Error Coordinates and Closed-Loop Dynamics}
\label{subsec:error_dynamics}
The distributed controller presented above couples three dynamical subsystems. We introduce error coordinates that decompose the closed-loop behavior into consensus tracking, optimization progress, and dual agreement.

\textbf{Consensus Error.} 
Let
\begin{equation}
    \be_{ij}(t) = \by_{ij}(t) - \bz_j(t)
\end{equation}
denote the error in agent $i$'s estimate of agent $j$'s state. By the pinning condition~\eqref{eq:pinning}, $\be_{jj}(t) \equiv 0$, so the error in the $j$-th block is carried entirely by the $N-1$ tracking agents; we stack their errors as $\tilde{\bE}_j = \text{col}(\be_{ij})_{i \neq j} \in \mathbb{R}^{(N-1) m_j}$. Because agent $j$ is held fixed, the consensus operator acting on $\tilde{\bE}_j$ is the grounded (Dirichlet) Laplacian $\mathcal{L}_g^{(j)} \in \mathbb{R}^{(N-1)\times(N-1)}$, the principal submatrix of $\mathcal{L}$ obtained by deleting its $j$-th row and column. For a connected graph, $\mathcal{L}_g^{(j)} \succ 0$~\cite{pirani2015smallest}; we write $\theta_{\min} = \min_{j \in \mathcal{V}} \lambda_{\min}(\mathcal{L}_g^{(j)}) > 0$.

\textbf{Dual Disagreement.} Let $\bar{\blambda}(t) = \frac{1}{N} \sum_{i=1}^N \blambda_i(t)$ and $\bar{\bmu}(t) = \frac{1}{N} \sum_{i=1}^N \bmu_i(t)$ denote the average dual variables. Define the disagreements $\bdelta_i^\lambda(t) = \blambda_i(t) - \bar{\blambda}(t)$ and $\bdelta_i^\mu(t) = \bmu_i(t) - \bar{\bmu}(t)$. Stack these as $\bdelta = \text{col}(\bdelta^\lambda, \bdelta^\mu) \in \mathbb{R}^{N(p+m)}$.

\textbf{Optimization Residual.} Since the centralized $V$ depends on a single shared $(\blambda, \bmu)$, identified with the network averages in the distributed setting, we define $W_o(t) = V(\bx(t), \bar{\blambda}(t), \bar{\bmu}(t))$; per-agent dual deviations are captured by~$W_\delta$ below.

In these coordinates, the closed-loop dynamics take the form:
\begin{align}
    \dot{\tilde{\bE}}_j &= -\xi(t) (\mathcal{L}_g^{(j)} \otimes I) \tilde{\bE}_j 
        - \mathbf{1}_{N-1} \otimes \mathbf{u}_j, 
        \label{eq:error_consensus}\\
    \dot{\bdelta} &= -\kappa(t) (\mathcal{L} \otimes I) \bdelta 
        + \boldsymbol{\eta}(t), 
        \label{eq:error_dual} \\
    \dot{W}_o &= \nabla_{\bx} V^\top \mathbf{u}^x 
        + \nabla_{\bar{\blambda}} V^\top \bar{\mathbf{u}}^\lambda 
        + \nabla_{\bar{\bmu}} V^\top \bar{\mathbf{u}}^\mu, 
        \label{eq:error_opt}
\end{align}
where $\mathbf{1}_{N-1} \otimes \mathbf{u}_j$ reflects that every tracking agent $i \neq j$ is forced by the anchor velocity $\dot{\bz}_j = \mathbf{u}_j$, $\bar{\mathbf{u}}^\lambda = \frac{1}{N}\sum_i \mathbf{u}_i^\lambda$ and $\bar{\mathbf{u}}^\mu = \frac{1}{N}\sum_i \mathbf{u}_i^\mu$ are the averaged dual controls, and $\boldsymbol{\eta}(t)$ is the gradient perturbation arising from heterogeneous gradient evaluations across agents. The three subsystems are bidirectionally coupled: the optimization dynamics perturb the consensus layer through~$\mathbf{u}_j$ in~\eqref{eq:error_consensus}, while the consensus estimation errors corrupt the gradients in~\eqref{eq:error_opt} and drive dual disagreement through~$\boldsymbol{\eta}(t)$ in~\eqref{eq:error_dual}.

The composite Lyapunov function combines the optimality residual with the network disagreement energy:
\begin{equation}
    W(t) = \underbrace{W_o(t)}_{\text{optimality (slow)}} + \underbrace{W_c(t) + k_d\, W_\delta(t)}_{\text{network disagreement (fast)}}.
    \label{eq:composite_preview}
\end{equation}
The optimality residual $W_o$ descends along the optimization gradient at the order-one rate $\sigma_{opt}$ and sets the prescribed-time rate, while the consensus and dual disagreements $W_c, W_\delta$ are driven to zero by the order-$\gamma_c$ gains $\xi$ and $\kappa = k_d\mu_o$ (see~\eqref{eq:consensus_scaling},~\eqref{eq:dual_consensus_scaling}), strictly faster. The control design objective in error coordinates is to establish the dissipation inequality
\begin{equation}
    \dot{W}(t) \leq -\frac{\gamma}{T - t}\, W(t), \quad \gamma > 0,
    \label{eq:target_dissipation}
\end{equation}
which, upon integration, yields $W(t) \leq W(0)\left(\frac{T-t}{T}\right)^\gamma \to 0$ as $t \to T^-$. 
We establish below that tuning parameters $(c_o, \mu_c, k_d)$ satisfying~\eqref{eq:target_dissipation} exist for any prescribed time~$T$.

\section{Distributed Convergence Analysis}
\label{sec:analysis}

In this section, we establish that the distributed optimizer from Sec.~\ref{sec:algorithm} achieves prescribed-time convergence to the unique v-GNE. The analysis strategy is to construct a composite Lyapunov function $W(t)$ and show, via a contradiction argument, that $W(t) \leq W(0) < c^*$ throughout $[0,T)$. This confinement ensures $W_o \leq W < c^*$, so the centralized state $(\bx, \bar{\blambda}, \bar{\bmu})$ remains in the compact sublevel set of Proposition~\ref{prop:sufficient_radial}, and all constants from Sec.~\ref{sec:centralized} are valid throughout.

\subsection{Main Convergence Result}

\begin{theorem}[Prescribed-Time Convergence to v-GNE]
\label{thm:main}
Consider the GNEP~\eqref{eq:cost_function} under Assumptions \ref{ass:connectivity}--\ref{ass:slater}. Let agents implement the distributed controller~\eqref{eq:consensus_dynamics}--\eqref{eq:dual_consensus_scaling}. If $W(0) < c^*$, then there exist finite thresholds $c_o^*, k_d^* > 0$ such that for all $c_o > c_o^*$ and $k_d > k_d^*$:
\begin{enumerate}
    \item {Consensus:} $\lim_{t \to T^-} \|\by_{ij}(t) - \bz_j(t)\| = 0$ for all $i, j$.
    \item {Optimality:} $\lim_{t \to T^-} V(\bx(t), \bar{\blambda}(t), \bar{\bmu}(t)) = 0$.
    \item {Dual Agreement:} $\lim_{t \to T^-} \|\blambda_i(t) - \blambda_j(t)\| = 0$ and $\lim_{t \to T^-} \|\bmu_i(t) - \bmu_j(t)\| = 0$ for all $i, j$.
    \item {Boundedness:} All signals remain bounded on $[0, T)$.
\end{enumerate}
Consequently, the system converges at time $T$ to the unique point $\bz^* = (\bx^*, \blambda^*, \bmu^*)$ with $\mathbf{S}(\bz^*) = 0$, the zero of the $\epsilon$-smoothed stationarity vector.
\end{theorem}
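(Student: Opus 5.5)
The plan is to work with the composite function \eqref{eq:composite_preview}, taking
\[
W_c=\tfrac12\sum_j \tilde{\bE}_j^\top(\mathcal{L}_g^{(j)}\otimes I)\tilde{\bE}_j
\quad\text{and}\quad
W_\delta=\tfrac12\|\bdelta\|^2 ,
\]
and to establish the dissipation inequality \eqref{eq:target_dissipation} on $[0,T)$. The argument runs in three stages.

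\textbf{Confinement by contradiction.} Suppose $t_1=\inf\{t: W(t)\ge c\}$ for some fixed $W(0)<c<c^*$ is finite. On $[0,t_1)$ we have $W_o\le c$, so $(\bx,\bar\blambda,\bar\bmu)\in\Omega_c$, which is compact by Proposition~\ref{prop:sufficient_radial}. Lemma~\ref{lem:nondeg} then gives $\underline\sigma$. Compactness together with $W_c,W_\delta\le c$ bounds the estimates $\by_i$ and the copies $\blambda_i,\bmu_i$, so that $\nabla V$, $\nabla^2 V$ and their Lipschitz constants $L_1,L_2$ are uniform on the relevant compact set. Once \eqref{eq:target_dissipation} is shown on $[0,t_1)$, it forces $W(t_1)\le W(0)<c$, contradicting the definition of $t_1$. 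Hence confinement holds on all of $[0,T)$, which gives Boundedness (item 4).

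\textbf{Derivative bounds for each block.}

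For $W_o$, I will write $\mathbf{u}^x=-\sigma_{opt}\nabla_{\bx}V(\bx,\bar\blambda,\bar\bmu)+\sigma_{opt}\,\mathbf{r}$. Here $\|\mathbf{r}\|\le L_1(\|\be\|+\|\bdelta\|)$, because $\by_i$ differs from $(\bx,\bar\blambda,\bar\bmu)$ only through the consensus errors and the dual disagreement. By Remark~\ref{rem:N_factor}, the averaged dual controls equal $-\sigma_{opt}\nabla_{\bar\blambda}V$ plus a residual of the same form; the consensus terms cancel in the average. Young's inequality and the PL bound $\|\nabla V\|^2\ge2\underline\sigma^2V$ then give
\[
\dot W_o\le-\sigma_{opt}\underline\sigma^2W_o+C_1\sigma_{opt}(W_c/\theta_{\min}+W_\delta).
\]

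For $W_c$, differentiating along \eqref{eq:error_consensus} and using $\lambda_{\min}(\mathcal{L}_g^{(j)})\ge\theta_{\min}$ gives a negative term $-\xi\theta_{\min}^2\|\tilde\bE\|^2$ plus a cross term driven by $\mathbf{u}_j$. Since $\|\mathbf{u}_j\|\le C\sigma_{opt}\,(\sqrt{W_o}+\|\be\|+\|\bdelta\|)$, the cross term is bounded by $\sigma_{opt}C_2(W_o+W_c+W_\delta)$ after Young's inequality.

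For $W_\delta$, I use the Fiedler bound on the disagreement subspace, where $\mathbf 1^\top\bdelta=0$. The perturbation satisfies $\|\boldsymbol\eta\|\le N\sigma_{opt}C_3(\sqrt{W_o}+\|\be\|+\|\bdelta\|)$. This yields
\[
k_d\dot W_\delta\le-2k_d^2\mu_o\lambda_2W_\delta+k_dC_4\sigma_{opt}(W_o+W_c+W_\delta).
\]

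\textbf{Gain selection.} This is the main obstacle. The $W_o$ decay rate is only $\underline\sigma^2\mu_c/(T-t)$, so the cross terms feeding $W_o$ from the fast states must be absorbed by the fast negative terms rather than by $W_o$ itself. The key fact is that $\mu_o(t)\ge\frac{T}{T-t}\cdot\frac{1}{T}\cdot T$, that is, $\mu_o\ge T/(T-t)$ for $\gamma_c>1$. Consequently $\xi\ge c_o\,T/(T-t)$ and $\kappa\ge k_dT/(T-t)$, so every term can be compared against the common scale $1/(T-t)$. The cross terms entering $\dot W_o$ and $\dot W_c$ through $W_o$ are proportional to $\sigma_{opt}$, with constants independent of $c_o$ and $k_d$. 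I therefore first fix $\mu_c$ and pick $\gamma<\underline\sigma^2\mu_c/2$. Next I choose $c_o^*$ so that $c_oT\theta_{\min}^2$ dominates $\mu_c(C_1/\theta_{\min}+C_2)+\gamma$. Then I choose $k_d^*$ so that $k_d^2T\lambda_2$ dominates $k_d\mu_cC_4+\mu_cC_1+\gamma k_d$. The one subtle point is the term $k_dC_4\sigma_{opt}W_o$, which grows with $k_d$. I will handle it by splitting $\boldsymbol\eta$ as $\bdelta$-part plus a remainder, using a Young weight $\propto 1/k_d$ so that only an $O(1)\cdot\sigma_{opt}W_o$ contribution survives, which the remaining margin in $\underline\sigma^2\mu_c$ absorbs. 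If this fails, the fallback is to exploit the exact projection: $\boldsymbol\eta$ projected off $\mathbf 1$ involves only gradient differences, which are $O(\|\be\|+\|\bdelta\|)$ plus the $\mathbf{S}_3$-residual, and the latter is $O(\sqrt{W_o})$.

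Once \eqref{eq:target_dissipation} holds, integration gives $W\le W(0)((T-t)/T)^\gamma\to0$. Each component vanishes with it: $W_c\to0$ gives item 1 because $\mathcal{L}_g^{(j)}\succ0$, $W_o\to0$ gives item 2, and $W_\delta\to0$ gives item 3. Finally, uniqueness of the zero of $\mathbf{S}$ on $\Omega_c$, via Lemma~\ref{lem:nondeg} and strong monotonicity as in Proposition~\ref{prop:centralized_PT}(iii), identifies the limit as $\bz^*$.
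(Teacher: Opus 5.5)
Your overall architecture matches the paper's: composite $W=W_o+W_c+k_dW_\delta$, confinement by contradiction, and integration of $\dot W\le-\gamma W/(T-t)$. Your confinement, with $W(0)<c<c^*$ and an enlarged compact set containing the agents' evaluation points, is tidier than the paper's. The dissipation step, however, does not close as written.

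\textbf{The main gap is the back-coupling onto the slow residual.} You apply Young's inequality with unit weights in the $\dot W_c$ and $k_d\dot W_\delta$ estimates. This deposits $C_2\sigma_{opt}W_o$ and $k_dC_4\sigma_{opt}W_o$ on $W_o$. Both scale with $\sigma_{opt}$ exactly like the only available dissipation, $\underline\sigma^2\sigma_{opt}W_o$. Their constants cannot be reduced by any admissible gain choice: $C_2$ is gain-independent, $k_dC_4$ even grows with $k_d$, and $\mu_c$ cancels. Your gain selection never checks the net $W_o$ coefficient. The exception is the $k_dC_4$ term, where an $O(1)\,\sigma_{opt}W_o$ remainder is simply asserted to fit in the margin.

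The missing idea, which is the paper's Step~4, is to load each such Young inequality onto the fast dissipation:
\[
C\sigma_{opt}\sqrt{W_c}\sqrt{W_o}\le\tfrac12\xi\theta_{\min}W_c+\tfrac{C^2\sigma_{opt}^2}{2\xi\theta_{\min}}W_o .
\]
The leftover then carries the factor $\sigma_{opt}/\xi\le\mu_c/(c_oT)$, which follows from your own bound $\xi\ge c_oT/(T-t)$, and is made negligible by large $c_o$. This is exactly where $\gamma_c>1$ does its work.

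The same device does not rescue an $O(\sigma_{opt}\sqrt{W_o})$ component of $\boldsymbol\eta$. Against the $k_d^2\lambda_2\mu_o$ dissipation of $k_dW_\delta$, it leaves
\[
\tfrac{C^2\sigma_{opt}^2}{2\lambda_2\mu_o}W_o\le\tfrac{C^2\mu_c}{2\lambda_2T}\,\sigma_{opt}W_o ,
\]
which is independent of both gains. Your ``remaining margin'' would therefore need a smallness condition on $\mu_c/T$ that the theorem does not impose. You need what the paper uses instead: $\boldsymbol\eta=O(\sigma_{opt}\sqrt{W_c})$, plus $O(\sigma_{opt}\|\bdelta\|)$ in your finer accounting. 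This holds because every agent applies the same gradient map at its own evaluation point, so $\boldsymbol\eta$ is a difference of that map at points separated by $O(\|\be\|+\|\bdelta\|)$. Your fallback has this observation, but the $\sqrt{W_o}$-sized $\mathbf{S}_3$ residual is not a heterogeneity term and does not belong in $\boldsymbol\eta$.

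\textbf{Second, your bound on the anchor velocity is false.} You claim $\|\mathbf{u}_j\|\le C\sigma_{opt}(\sqrt{W_o}+\|\be\|+\|\bdelta\|)$. But $\mathbf{u}_j^\lambda,\mathbf{u}_j^\mu$ contain $-\kappa\sum_k a_{jk}(\blambda_j-\blambda_k)$ and $-\kappa\sum_k a_{jk}(\bmu_j-\bmu_k)$, which are $O(\kappa\|\bdelta\|)$ with $\kappa=k_d\mu_o$ of the fast order. This creates a cross term $\kappa\|\tilde{\bE}\|\|\bdelta\|$ in $\dot W_c$ that grows linearly in $k_d$. Your ordering, which fixes $c_o^*$ and then enlarges $k_d$, does not cover it.

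The term is absorbable, and this is precisely why $W_\delta$ carries the weight $k_d$. Young gives a $\bdelta$-coefficient proportional to $\kappa^2/(\xi\theta_{\min}^2)\le k_d^2\mu_o/(c_o\theta_{\min}^2)$. That is dominated by the $k_d^2\lambda_2\mu_o$ dissipation once $c_o$ exceeds a $k_d$-independent threshold; this is the paper's $C_2\kappa\sqrt{W_c}\sqrt{W_\delta}$ term.

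Likewise, your $k_dC_4\sigma_{opt}W_c$ term makes $c_o^*$ depend on $k_d$ unless Young is weighted onto the $\bdelta$ dissipation. That weighting leaves a $k_d$-independent coefficient $O(\sigma_{opt}^2/\mu_o)$ on $\|\tilde{\bE}\|^2$, which large $c_o$ then absorbs.

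With these repairs your route coincides with the paper's. Using $\tfrac12\|\bdelta\|^2$ instead of the Laplacian-weighted $W_\delta$ is harmless. Including $\bdelta$ in the gradient perturbation is also harmless and is more careful than the paper.
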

This $\bz^*$ is an $\epsilon$-approximate v-GNE: by the standard FB smoothing estimate~\cite{liao-mcpherson_regularized_2019}, it lies within $O(\epsilon)$ of the exact solution $\bz^*_{\text{exact}}$ and recovers it as $\epsilon \to 0$, under the regularity conditions of~\cite{liao-mcpherson_regularized_2019}. The gain thresholds $c_o^*$ and $k_d^*$ depend on the problem constants and are met by taking $c_o$ and $k_d$ sufficiently large; for affine inequality constraints $c^* = +\infty$ and the requirement $W(0) < c^*$ holds trivially.

\subsection{Proof of Theorem \ref{thm:main}}
\label{subsec:proof}
The proof constructs the composite Lyapunov function $W(t) = W_o(t) + W_c(t) + k_d W_\delta(t)$ from~\eqref{eq:composite_preview} and proceeds in five steps: (1) definition of the Lyapunov components, (2) a PL inequality for the optimality residual, (3) dissipation of the optimality residual, (4) dissipation of the network disagreement and combination, and (5) trajectory confinement and integration to obtain prescribed-time convergence. Steps~1--4 are carried out under the standing assumption that $W(t) \leq W(0)$; Step~5 verifies that this assumption is self-consistent for all $t \in [0,T)$.

Since $W(0) < c^*$ by hypothesis, the sublevel set $\Omega_{W(0)} = \{\bz : V(\bz) \leq W(0)\}$ is compact by Proposition~\ref{prop:sufficient_radial} (note $\Omega_{W(0)} \supseteq \Omega_{V(0)}$ since $W(0) \geq V(\bz(0))$; for affine constraints, $c^* = +\infty$ and the distinction is vacuous). On this set, Lemma~\ref{lem:nondeg} yields the uniform lower bound $\sigma_{\min}(\nabla\mathbf{S}(\bz)) \geq \underline{\sigma} > 0$, and $\nabla V$ is Lipschitz with constant $L_V > 0$ by Assumption~\ref{ass:smoothness}. With slight abuse of notation, $\underline{\sigma}$ and $L_V$ henceforth refer to constants computed over $\Omega_{W(0)}$.

\subsubsection*{Step 1: Composite Lyapunov Function}

Recall the composite structure from~\eqref{eq:composite_preview}: $W(t) = W_o(t) + W_c(t) + k_d W_\delta(t)$. Using the error coordinates from Section~\ref{subsec:error_dynamics}, the components are:
\begin{align}
    W_c(t) &= \frac{1}{2} \sum_{j=1}^N \tilde{\bE}_j^\top (\mathcal{L}_g^{(j)} \otimes I_{m_j}) \tilde{\bE}_j, \label{eq:Wc} \\
    W_o(t) &= V(\bx(t), \bar{\blambda}(t), \bar{\bmu}(t)), \label{eq:Wo} \\
    W_\delta(t) &= \frac{1}{2} \bdelta^\top (\mathcal{L} \otimes I_{p+m}) \bdelta. \label{eq:Wdelta}
\end{align}
The optimality residual $W_o$ is the centralized OLF evaluated at the network-averaged dual variables, while the consensus and dual energies $W_c, W_\delta$ quantify the primal-estimation and dual-multiplier disagreements among agents.

Since $\mathcal{G}$ is connected, $\mathcal{L}$ has a simple zero eigenvalue with eigenvector $\mathbf{1}_N$, and all other eigenvalues are positive. The dual disagreement $\bdelta$ is orthogonal to $\mathbf{1}_N \otimes I$ by construction (since $\sum_i \bdelta_i = 0$), so the Rayleigh quotient of $\mathcal{L} \otimes I$ on this subspace is bounded below by $\lambda_2(\mathcal{L})$, yielding the standard coercive bound:
\begin{equation}
    W_\delta \geq \frac{\lambda_2(\mathcal{L})}{2} \|\bdelta\|^2. \label{eq:Wdelta_coercive}
\end{equation}
For $W_c$, the grounded Laplacian is positive definite, $\mathcal{L}_g^{(j)} \succeq \theta_{\min} I$, so each summand satisfies $\tilde{\bE}_j^\top(\mathcal{L}_g^{(j)} \otimes I_{m_j})\tilde{\bE}_j \geq \theta_{\min}\|\tilde{\bE}_j\|^2$, yielding the coercive bound:
\begin{equation}
    W_c \geq \frac{\theta_{\min}}{2} \sum_j \|\tilde{\bE}_j\|^2. \label{eq:Wc_coercive}
\end{equation}
Thus, $W(t) = 0$ if and only if consensus, optimality, and dual agreement are simultaneously achieved.

\subsubsection*{Step 2: PL Inequality for the Optimality Residual}

Because the uHGD provides only the inequality $\dot{V} \leq -\sigma(V,t)$, a decay rate on the optimality residual cannot be read off directly; as in the centralized case, a PL inequality~\cite{karimi_linear_2016} is required. Since $V$ depends only on the centralized variables $(\bx, \bar{\blambda}, \bar{\bmu})$, the chain rule together with the $N$-scaling in~\eqref{eq:dual_lambda_dynamics}--\eqref{eq:dual_mu_dynamics} aggregates the per-agent dissipation to $\sum_{i=1}^N [\|\nabla_{\bx_i} V\|^2 + N\|\nabla_{\blambda_i} V\|^2 + N\|\nabla_{\bmu_i} V\|^2] = \|\nabla_{(\bx,\bar{\blambda},\bar{\bmu})} V\|^2$, where $V$ is evaluated at the averaged duals as in Remark~\ref{rem:N_factor}. By Lemma~\ref{lem:nondeg}, $\|\nabla V\|^2 = \|\nabla\mathbf{S}^\top\mathbf{S}\|^2 \geq \underline{\sigma}^2\|\mathbf{S}\|^2 = 2\underline{\sigma}^2 V$, which applied to $V = W_o$ gives the PL inequality
\begin{equation}
    \|\nabla_{(\bx,\bar{\blambda},\bar{\bmu})} V\|^2 \geq 2\underline{\sigma}^2\, W_o.
    \label{eq:PL_Vnet}
\end{equation}
The disagreement directions $\blambda_i - \bar{\blambda}$ lie in the null space of $\nabla V$ and carry no optimality dissipation; they are instead driven to zero by the consensus and dual gains, as quantified in Step~4.

\subsubsection*{Step 3: Dissipation of the Optimality Residual}

We first bound the optimality dissipation. Differentiating $W_o = V(\bx, \bar{\blambda}, \bar{\bmu})$ along the distributed dynamics, the consensus terms in~\eqref{eq:dual_lambda_dynamics}--\eqref{eq:dual_mu_dynamics} vanish in the dual average (since $\frac{1}{N}\sum_i (\mathcal{L}\otimes I)\blambda = \mathbf{0}$), so the gradient-descent contributions aggregate to $-\sigma_{opt}(t)\|\nabla_{(\bx,\bar{\blambda},\bar{\bmu})} V\|^2$. Each agent evaluates its gradient at its network-state estimate $\by_i$ rather than at the true network state $\bz^{\mathrm{net}} = \mathrm{col}(\bz_1,\dots,\bz_N)$ (with a slight abuse of notation, $V(\by_i)$ denotes the centralized OLF evaluated at agent $i$'s reconstructed global primal estimate and its own dual copies $(\blambda_i,\bmu_i)$), introducing a Lipschitz perturbation $\bm{\eta}_i = \nabla V(\by_i) - \nabla V(\bz^{\mathrm{net}})$ with $\|\bm{\eta}_i\| \leq L_V \|\by_i - \bz^{\mathrm{net}}\|$; since $\|\by_i - \bz^{\mathrm{net}}\|^2 = \sum_j \|\be_{ij}\|^2$, the perturbation is controlled by $W_c$ through~\eqref{eq:Wc_coercive}. Since $V = \tfrac{1}{2}\|\mathbf{S}\|^2$ and $\|\nabla\mathbf{S}\| \leq M_S$ on $\Omega_{W(0)}$, $\|\nabla V\| \leq M_S\sqrt{2 W_o}$, so each cross-term $\langle \nabla V, \bm{\eta}_i\rangle$ is bounded by $C_o\sqrt{W_o}\,\sqrt{W_c}$. Applying the PL inequality~\eqref{eq:PL_Vnet},
\begin{equation}
    \dot{W}_o \leq -2\underline{\sigma}^2\, \sigma_{opt}(t)\, W_o + C_o\, \sigma_{opt}(t)\, \sqrt{W_o}\,\sqrt{W_c},
    \label{eq:Vnet_dissipation}
\end{equation}
where $C_o > 0$ depends on $M_S$, $L_V$, $N$, and $\lambda_2(\mathcal{L})$. The cross-term couples the optimality residual to the consensus error and is absorbed in Step~4.

\subsubsection*{Step 4: Dissipation of the Network Disagreement and Combination}

Both disagreement energies are driven by the order-$\gamma_c$ gains, following the prescribed-time observer framework of~\cite{wang_prescribed-time_2019}. Differentiating $W_c$ along~\eqref{eq:error_consensus} and $W_\delta$ along~\eqref{eq:error_dual}, and applying $(\mathcal{L}_g^{(j)})^2 \succeq \theta_{\min}\,\mathcal{L}_g^{(j)}$ to the consensus terms and $(\mathcal{L}\otimes I)^2 \succeq \lambda_2(\mathcal{L})(\mathcal{L}\otimes I)$ on the disagreement subspace to the dual terms, gives
\begin{align}
    &\dot{W}_c \leq -2\xi\theta_{\min} W_c + C_1\sigma_{opt}\sqrt{W_c}\sqrt{W_o} + C_2\kappa\sqrt{W_c}\sqrt{W_\delta}, \label{eq:Wc_dot_new}\\
    &k_d\dot{W}_\delta \leq -2k_d^2\lambda_2\mu_o W_\delta + C_3 k_d\sigma_{opt}\sqrt{W_c}\sqrt{W_\delta}. \label{eq:Wdelta_dot}
\end{align}
In~\eqref{eq:Wc_dot_new} the forcing is the anchor velocity $\dot{\bz}_j = \mathbf{u}_j$, whose primal and dual-descent part is $O(\sigma_{opt})$ (scaling with $\sqrt{W_o}$) and whose dual-consensus part is $O(\kappa)$ (scaling with $\sqrt{W_\delta}$); in~\eqref{eq:Wdelta_dot} the forcing $\bm{\eta}_\delta = O(\sigma_{opt}\sqrt{W_c})$ is the heterogeneous dual descent. Because $\xi$ and $\kappa = k_d\mu_o$ share the order $\gamma_c$ and $\sigma_{opt}/\mu_o \to 0$, Young's inequality absorbs the two cross-terms proportional to $\sqrt{W_c}\sqrt{W_\delta}$ into the order-$\gamma_c$ dissipations for $c_o, k_d$ above finite thresholds, leaving only the optimality coupling proportional to $\sigma_{opt}\sqrt{W_c}\sqrt{W_o}$.

Summing~\eqref{eq:Vnet_dissipation},~\eqref{eq:Wc_dot_new}, and~\eqref{eq:Wdelta_dot}, and absorbing this last cross-term by Young's inequality $C\sigma_{opt}\sqrt{W_c}\sqrt{W_o} \leq \tfrac{1}{2}\xi\theta_{\min} W_c + \tfrac{C^2\sigma_{opt}^2}{2\xi\theta_{\min}}W_o$,
\begin{align}
    \dot{W} \leq &-\Bigl(2\underline{\sigma}^2 - \tfrac{C^2\sigma_{opt}(t)}{2\xi(t)\theta_{\min}}\Bigr)\sigma_{opt}(t)\,W_o - \tfrac{1}{2}\xi(t)\theta_{\min} W_c \notag \\
    & - k_d^2\lambda_2\mu_o(t)\,W_\delta.
    \label{eq:combined_dissipation}
\end{align}
Since $\xi(t)=k_o + c_o(T/(T-t))^{\gamma_c}$ and $\sigma_{opt}(t)=\mu_c/(T-t)$, the ratio $\sigma_{opt}(t)/\xi(t) = \mu_c(T-t)^{\gamma_c-1}/[k_o(T-t)^{\gamma_c}+c_o T^{\gamma_c}]\to 0$ as $t\to T$ for any $\gamma_c > 1$, and is uniformly bounded on $[0,T)$ (the same holds for $\sigma_{opt}/\kappa$). Hence the $W_o$ coefficient in~\eqref{eq:combined_dissipation} is strictly positive for $c_o$ above a finite threshold $c_o^*$; and since $\mu_o/\sigma_{opt}\to\infty$, both $\tfrac12\xi\theta_{\min}$ and $k_d\lambda_2\mu_o$ dominate $2\underline{\sigma}^2\sigma_{opt}$ for $c_o, k_d$ above finite thresholds, so every term decays at no less than the optimality rate. With $\sigma_{opt}=\mu_c/(T-t)$ this yields, for all $c_o>c_o^*$ and $k_d>k_d^*$,
\begin{equation}
    \dot{W}(t) \leq -\frac{\gamma}{T-t}\,W(t),
    \label{eq:final_dissipation}
\end{equation}
where $\gamma>0$ can be taken arbitrarily close to the centralized rate $2\underline{\sigma}^2\mu_c$ as $c_o\to\infty$.

\subsubsection*{Step 5: Trajectory Confinement and Conclusion}

The dissipation inequality~\eqref{eq:final_dissipation} was derived under the standing assumption that $W(t) \leq W(0)$, which ensures that all constants are evaluated on the compact set $\Omega_{W(0)}$. We now verify this assumption. Suppose for contradiction that $W(t_0) > W(0)$ for some $t_0 \in (0,T)$, and define $t^* = \inf\{t > 0 : W(t) > W(0)\}$. By continuity, $W(t) \leq W(0)$ on $[0, t^*]$ with $W(t^*) = W(0)$, so~\eqref{eq:final_dissipation} holds on this interval. Integrating yields $W(t^*) \leq W(0)((T - t^*)/T)^\gamma < W(0)$, contradicting $W(t^*) = W(0)$. Therefore $W(t) \leq W(0)$ for all $t \in [0, T)$, and~\eqref{eq:final_dissipation} holds throughout.

Solving the differential inequality~\eqref{eq:final_dissipation} by direct integration:
\begin{equation}
    W(t) \leq W(0) \left(\frac{T - t}{T}\right)^\gamma.
    \label{eq:W_bound}
\end{equation}

Since $(T-t)/T \to 0$ as $t \to T^-$, we have $\lim_{t \to T^-} W(t) = 0$, which implies $W_c(T) = 0$, $W_o(T) = 0$, and $W_\delta(T) = 0$. The first gives $\be_{ij}(T) = 0$ for all $i \neq j$ (consensus); the second gives $V(\bx, \bar{\blambda}, \bar{\bmu}) = 0$ and hence $\bS(\bx, \bar{\blambda}, \bar{\bmu}) = 0$ (optimality); the third gives $\blambda_i(T) = \blambda_j(T)$ and $\bmu_i(T) = \bmu_j(T)$ for all $i, j$ (v-GNE).

Since $W(t) \to 0$ monotonically and $\Omega_{W(0)}$ is compact, $\bS(\bx, \bar{\blambda}, \bar{\bmu}) \to 0$ with $\bar{\blambda} = \blambda_i$ and $\bar{\bmu} = \bmu_i$ for all $i$ at $t = T$. By uniqueness of the v-GNE under strong monotonicity~\cite{facchinei_generalized_2010}, the primal solution $\bx^*$ is unique. The full row rank of $A$ and the strict positivity of $\Phi_\epsilon$ then uniquely determine the dual variables $(\blambda^*, \bmu^*)$, so $\bS$ has a unique zero. This yields $(\bx(T), \blambda_i(T), \bmu_i(T)) = \bz^*$ for all $i$, confirming convergence to the unique v-GNE.

Boundedness on $[0, T)$ follows from~\eqref{eq:W_bound}: since $W(t) \leq W(0)$ for all $t \in [0,T)$, the centralized state $(\bx, \bar{\blambda}, \bar{\bmu})$ remains in the compact sublevel set $\Omega_{W(0)}$, and the dual disagreement satisfies $\|\bdelta\|^2 \leq 2W_\delta/\lambda_2 \leq 2W(0)/(k_d \lambda_2)$, so every agent's trajectory remains bounded. \hfill $\blacksquare$

The theoretical gains~\eqref{eq:consensus_scaling}--\eqref{eq:dual_consensus_scaling} diverge at $t = T$, which cannot be realized in digital implementation. In practice, one replaces $(T-t)^{-1}$ with $(T - t + \bar{\varepsilon})^{-1}$ for a small $\bar{\varepsilon} > 0$. Unlike the Fischer-Burmeister smoothing~$\epsilon$, which is an algorithmic design choice determining the equilibrium, $\bar{\varepsilon}$ is purely a numerical artifact. The regularized analysis yields $W(T) = O(\bar{\varepsilon}^\gamma)$.

\section{Numerical Simulations}
\label{sec:simulations}

We validate the proposed prescribed-time distributed GNE (PT-DGNE) algorithm in two scenarios: a Cournot competition game~\cite{martinez-piazuelo_payoff_2022} and a time-critical sensor coverage application.

\subsection{Cournot Competition Game}
We validate the algorithm on a Networked Cournot Competition with both equality and inequality constraints, using the minimal connectivity of a spanning tree topology.

\subsubsection{Setup and Parameters}
Consider $N=20$ firms, each choosing production level $x_i \in \mathbb{R}$ to maximize profit:
\begin{equation}
    J_i(x_i, \mathbf{x}_{-i}) = \underbrace{(\alpha_i x_i^2 + \beta_i x_i)}_{\text{Production Cost}} - \underbrace{x_i \left(P_0 - d \sum_{j=1}^N x_j\right)}_{\text{Revenue}},
\end{equation}
where $P_0 = 50$ is the base price, $d = 0.2$ is the price elasticity, and the cost coefficients are randomly drawn from $\alpha_i \in [1, 2]$ and $\beta_i \in [5, 10]$.

The agents are subject to shared coupling constraints:
\begin{align}
    &\sum_{i=1}^N x_i \leq C_{max}, \qquad \text{(Infrastructure Capacity)} \\
    &\sum_{i=1}^N r_i x_i = R_{target}, \qquad \text{(Regulatory Quota)}
\end{align}
where $C_{max} = 40$ and $R_{target} = 40$. The weights $r_i \in [0.8, 1.2]$ represent agent-specific efficiency factors. With these parameters, the inequality (capacity) constraint is active at the equilibrium, while the equality constraint is active by definition. The prescribed convergence time is $T = 10$\,s with gains $\mu_c = 20$, $k_o = 50$, $c_o = 100$, $k_d = 5$, and consensus exponent $\gamma_c = 2$. The Fischer-Burmeister smoothing parameter is $\epsilon = 10^{-8}$, regularized with $\bar{\varepsilon} = 10^{-10}$, and the dynamics are integrated with \textit{ode15s} (\texttt{RelTol} $10^{-8}$, \texttt{AbsTol} $10^{-13}$, \texttt{MaxStep} $0.002$) under a fixed random seed. Since the inequality constraints are affine, $c^* = +\infty$ and the initial condition of Thm.~\ref{thm:main} imposes no restriction.

\subsubsection{Results}

To verify the algorithm's robustness to sparse connectivity, the agents communicate over a spanning tree topology shown in Fig.~\ref{fig:topology}. Despite the low algebraic connectivity ($\lambda_2 \approx 0.096$), the prescribed-time consensus protocol ensures global information is shared effectively before the deadline.

\begin{figure}[ht]
    \centering
    \includegraphics[width=0.65\columnwidth]{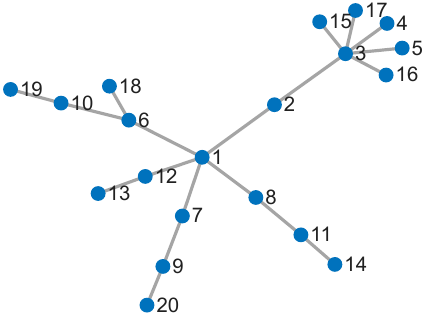}
    \caption{Communication topology: A tree graph with $N=20$ agents. This minimal connectivity represents a challenging scenario for distributed consensus.}
    \label{fig:topology}
\end{figure}

Figure~\ref{fig:primal_traj} displays the trajectories of the decision variables $x_i(t)$ for all 20 agents. Starting from identical initial conditions $x_i(0) = 5$, chosen to confirm that production differentiation arises from the optimization dynamics alone, the agents rapidly adjust their production levels. As $t \to T$, the time-varying gains enforce exact convergence to the equilibrium values $x_i^* \in [1.38, 3.07]$. The trajectories are smooth and bounded throughout the interval $[0, T)$, and the final primal error is $\|\mathbf{x}(T) - \mathbf{x}^*\| \approx 10^{-11}$.

\begin{figure}[ht]
    \centering
    \includegraphics[width=.99\columnwidth]{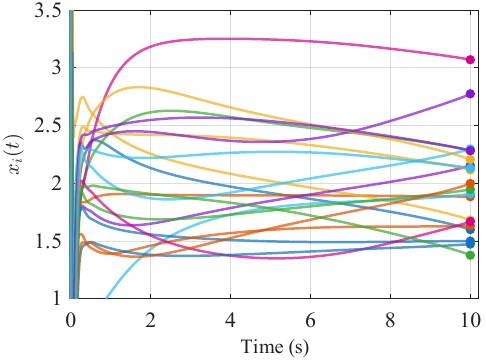}
    \caption{Trajectories of decision variables $x_i(t)$ for $N=20$ agents. All agents converge to their optimal production levels exactly at $T=10$\,s.}
    \label{fig:primal_traj}
\end{figure}

Figure~\ref{fig:stationarity} shows the stationarity residual $\|\bS(\mathbf{z}(t))\|$, capturing the combined error in primal optimality, dual feasibility, and complementarity. Both the centralized benchmark and the distributed algorithm exhibit the characteristic waterfall shape of prescribed-time methods, with the residual dropping to $\approx 10^{-10}$ at the deadline. The constraint residuals at $T$ are both below $10^{-10}$, with the capacity constraint active at the solution.

\begin{figure}[ht]
    \centering
    \includegraphics[width=.99\columnwidth]{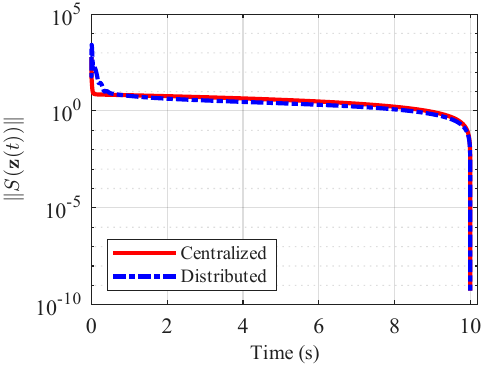}
    \caption{Stationarity residual $\|\bS(\mathbf{z}(t))\|$ over time. The residual decays to zero, indicating that the KKT conditions for the v-GNE are satisfied at $T$.}
    \label{fig:stationarity}
\end{figure}

Convergence to a v-GNE additionally requires consensus on the dual variables: starting from randomized initial values $\lambda_i(0) \sim \mathcal{U}[0.5,\, 1.5]$ and $\mu_i(0) \sim \mathcal{U}[10,\, 15]$, the maximum disagreement between agents' local multiplier estimates decays to $\approx 10^{-10}$, confirming that the dual consensus dynamics dominate the optimization-induced drift and that all agents converge to the shared shadow prices $\lambda^* \approx 32.92$ and $\mu^* \approx -4.36$.

\subsection{Time-Critical Sensor Coverage}
\label{subsec:sensor_network}

We now apply the algorithm to an engineering problem: optimal deployment of a mobile sensor network under shared resource constraints. Unlike the economic example, this scenario represents a solver-in-the-loop setup where the optimization must complete exactly within the sampling interval $T_{sample} = 1$~s to generate valid waypoints for vehicle autopilots.

\subsubsection{Setup, Parameters, and Results}
Consider $N=20$ mobile sensors operating in a 2D plane, $\mathbb{R}^2$. The agents communicate over a spanning tree topology shown in Fig.~\ref{fig:topology}. Each sensor $i$ aims to minimize a cost function balancing target tracking and network connectivity:
\begin{equation}
    J_i(\mathbf{x}_i, \mathbf{x}_{-i}) = \frac{1}{2} \|\mathbf{x}_i - \mathbf{b}_i\|^2 + \sum_{j \in \mathcal{N}_i} \frac{a_{ij}}{2} \|\mathbf{x}_i - \mathbf{x}_j\|^2,
\end{equation}
where $\mathbf{x}_i \in \mathbb{R}^2$ is the position of sensor $i$, $\mathbf{b}_i$ is the location of the target assigned to sensor $i$, and $a_{ij}$ are the entries of the adjacency matrix.

The system is coupled by a shared power budget constraint, e.g., a total battery capacity or tethering limit. This introduces a global inequality constraint:
\begin{equation}
    g(\mathbf{x}) = \sum_{i=1}^N \|\mathbf{x}_i\|^2 - P_{total} \le 0.
\end{equation}
We position the targets $\mathbf{b}_i$ on a circle of radius $R=15$m, while setting $P_{total}$ such that the maximum average deployment radius is $R_{max}=10$m. This configuration forces the unconstrained optimum to be infeasible, necessitating a GNE solution on the boundary of the feasible set.

The sensor coverage problem has a nonlinear quadratic constraint, and its shared feasible set $\{\bx : \sum_i \|\bx_i\|^2 \leq P_{\text{total}}\}$ is a ball, hence bounded, so a finite threshold $c^* > 0$ exists by Proposition~\ref{prop:sufficient_radial}. Because the constraint is nonlinear, the guarantee here is semi-global, requiring $W(0) < c^*$; as Proposition~\ref{prop:sufficient_radial} establishes $c^*$ only as an existence result, it is not computed explicitly, and the convergence reported below serves as an a posteriori certificate that this condition holds. As in the Cournot example, the consensus and dual gains $c_o = 100$, $k_d = 5$ and consensus exponent $\gamma_c = 2$ are used, with optimization gains $\mu_c = 13.63$, $k_o = 50$, and Fischer-Burmeister smoothing $\epsilon = 10^{-8}$; the simulation results below confirm that these gains satisfy the gain-dominance requirement of Theorem~\ref{thm:main} for this structurally different problem. Since $\nabla^2 g = 2I_n$ and the smoothed multiplier $\widetilde{\lambda}$ is positive, the curvature term $2\widetilde{\lambda} I_n \succeq 0$ is automatic, so the generalized Lagrangian Hessian is positive definite by strong monotonicity alone, with no condition to verify numerically.

We note that the pairwise coupling terms $\|\mathbf{x}_i - \mathbf{x}_j\|^2$ in the cost function prevent this problem from being formulated as an aggregative game, since each agent requires individual neighbor positions rather than a scalar aggregate. Consequently, algorithms designed for aggregative games, such as~\cite{guo_distributed_2025}, are not applicable to this scenario.

The algorithm is executed with a prescribed time $T=1$~s. Sensors are initialized at the origin with dual estimates $\lambda_i(0) \sim \mathcal{U}[0.05, 0.15]$, the power budget is $P_{\text{total}} = N R_{max}^2 = 2000$, and the regularization parameter is $\bar{\varepsilon} = 10^{-10}$. The integration uses MATLAB's built-in \textit{ode15s} with an analytical Jacobian (\texttt{RelTol} $10^{-9}$, \texttt{AbsTol} $10^{-11}$, \texttt{MaxStep} $0.1$). The results are compared against a centralized prescribed-time solver implementing~\eqref{eq:centralized_GD} with the same $T$. Wall-clock computation times on a standard desktop are $1.15$~s for the centralized solver and $0.66$~s per agent for the distributed algorithm. In a parallel implementation with dedicated hardware per agent, the distributed wall-clock time is well within the $T=1$~s sampling window, whereas the centralized solver exceeds it.

Figure~\ref{fig:sensor_convergence} compares the norm of the stationarity residual $\|\bS(\mathbf{z})\|$. Both the centralized solver and the proposed PT-DGNE algorithm drive the residual below $10^{-8}$ at $t=T$. The vertical drop on the log-scale plot confirms that the optimization process effectively terminates at the prescribed deadline, providing a mathematically exact solution for the control loop.

\begin{figure}[ht]
    \centering
    \includegraphics[width=.99\columnwidth]{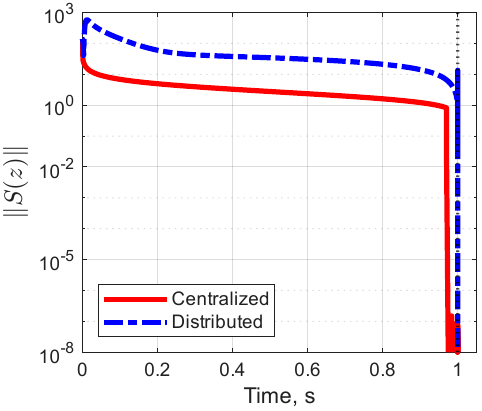}
    \caption{Convergence of the Stationarity vector for the Sensor Network. Note the vertical drop below $\approx 10^{-8}$ at exactly $T=1$~s, validating the prescribed-time property.}
    \label{fig:sensor_convergence}
\end{figure}

Figure~\ref{fig:sensor_deployment} illustrates the final configuration at $t=T$. The tension is visually evident: sensors (blue) attempt to reach their targets (red) but are constrained by the power budget (dashed line). The agents automatically distribute themselves along the active constraint boundary. The shared constraint $g(\mathbf{x})$ is maintained throughout $[0,T)$ and becomes active exactly at the deadline without violation, as certified by the $S_3$ block of the stationarity residual converging to zero in Fig.~\ref{fig:sensor_convergence}. Convergence to a v-GNE further requires the per-agent multipliers to agree; the dual-consensus layer drives the maximum disagreement $\max_{i,j}\|\lambda_i - \lambda_j\|$ to machine precision by the deadline, so all sensors recover the common shadow price $\lambda^* \approx 0.18$ for the power budget, in tandem with the stationarity convergence of Fig.~\ref{fig:sensor_convergence}.

\begin{figure}[ht]
    \centering
    \includegraphics[width=0.99\columnwidth]{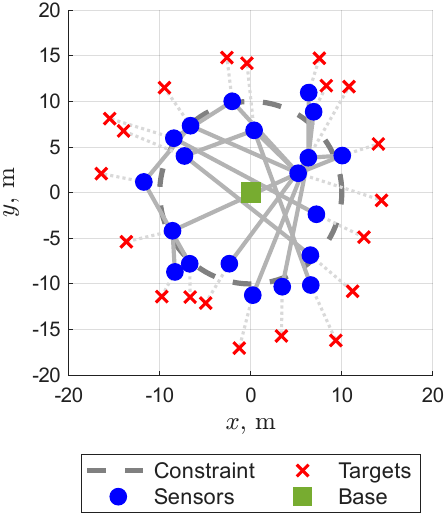}
    \caption{Sensor Deployment at $t=T$. The sensors are distributed along the boundary of the active power constraint, balancing target tracking (dotted lines).}
    \label{fig:sensor_deployment}
\end{figure}



\section{Conclusions}
\label{sec:conclusion}

This paper addressed GNE seeking under a hard real-time constraint, a regime in which asymptotic and fixed-time guarantees fall short because the solution is useful only if it arrives by a known deadline. By casting the variational GNE conditions as the zero of a stationarity map and driving that map to zero with a time-varying feedback, we obtained convergence at a user-chosen time $T$ while handling the shared coupling constraints and general cost coupling that the existing prescribed-time NE literature does not.

A central lesson of the distributed design is that prescribed-time optimization and prescribed-time consensus cannot simply be run in parallel and summed. Because each agent optimizes against an estimate of a global state that the optimization itself keeps moving, the estimation, optimization, and dual-agreement layers perturb one another throughout the transient. A multi-rate gain schedule, in which the observer and dual-consensus layers diverge faster than the optimization layer, is what decouples these effects asymptotically and lets a single composite Lyapunov function certify that all error components vanish together at $T$. The resulting gain-dominance conditions are mild, since they can be met on any connected graph by taking the consensus and dual gains large enough.

The sensor-coverage study shows the practical payoff most directly. Acting as a solver in the loop with one processor per agent, the distributed algorithm returns a feasible, constraint-satisfying waypoint set within the sampling interval, whereas the centralized solver exceeds it, which is exactly the setting prescribed-time guarantees are meant for. The price of the guarantee is communication: each agent estimates the full network state, so the per-step load grows with the network size, and the time-varying gains must be regularized near $T$ for digital implementation. Relaxing the full-state estimation requirement, extending the architecture to directed or time-varying graphs, and adding event-triggered communication to reduce overhead are natural next steps.


\appendices

\section{Proof of Lemma~\ref{lem:nondeg}}
\label{app:nondeg}
\begin{proof}
The Jacobian of $\mathbf{S}(\bz)$ with respect to $\bz = \mathrm{col}(\bx,\blambda,\bmu)$ has the block structure
\begin{equation}
    \nabla\mathbf{S}(\bz) = \begin{pmatrix} H_L(\bx,\blambda) & \nabla\mathbf{g}(\bx)^\top D_{\widetilde{\lambda}'} & A^\top \\ A & 0 & 0 \\ -D_g\nabla\mathbf{g}(\bx) & D_\lambda & 0 \end{pmatrix},
    \label{eq:nablaS_block}
\end{equation}
where 
\begin{equation}
    H_L(\bx,\blambda) = \nabla_{\bx}\mathbf{F}(\bx) + \sum_{j=1}^{p} \widetilde{\lambda}_j \nabla^2 g_j(\bx)    
\end{equation}
is the generalized Lagrangian operator, $D_{\widetilde{\lambda}'} = \mathrm{diag}(\widetilde{\lambda}_j')$ with $\widetilde{\lambda}_j' \in (0,1)$ arising from differentiating $\widetilde{\lambda}_j$ in~\eqref{eq:lambda_tilde} with respect to $\lambda_j$, $D_g = \mathrm{diag}(d^g_j)$ with 
\begin{equation}
    d^g_j = -g_j(\bx)/\sqrt{\lambda_j^2 + g_j(\bx)^2 + \epsilon^2} - 1
\end{equation} 
collects the derivative of $\Phi_\epsilon$ with respect to its second argument (the $(3,1)$ block $-D_g\nabla\mathbf{g}(\bx)$), and $D_\lambda = \mathrm{diag}(d^\lambda_j)$ with 
\begin{equation}
    d^\lambda_j = \lambda_j/\sqrt{\lambda_j^2 + g_j(\bx)^2 + \epsilon^2} - 1
\end{equation} 
collects the derivative with respect to its first argument (the $(3,2)$ block $D_\lambda$). For $\epsilon > 0$, both lie strictly in $(-2,0)$ at every point, so $D_\lambda \prec 0$ is invertible.

Since $\widetilde{\lambda}_j > 0$ and each $g_j$ is convex, $\sum_j \widetilde{\lambda}_j \nabla^2 g_j \succeq 0$, so strong monotonicity (Assumption~\ref{ass:monotonicity}) gives $v_1^\top H_L(\bx,\blambda)\,v_1 \geq m_F\|v_1\|^2$ for all $v_1$, with no sign assumption on $\blambda$. With $D_{\widetilde{\lambda}'} \succ 0$, $D_\lambda \prec 0$, and $D_g \prec 0$, a block-elimination argument on this smoothed Jacobian shows $\nabla\mathbf{S}(\bz)\,v = 0 \Rightarrow v = 0$, hence $\nabla\mathbf{S}(\bz)$ is non-singular at every $\bz$; this is the GNE Jacobian regularity result of~\cite{mudrik_optimization_2025-1}.

The function $\bz \mapsto \sigma_{\min}(\nabla\mathbf{S}(\bz))$ is continuous (as $\mathbf{S}$ is twice continuously differentiable by Assumption~\ref{ass:smoothness}) and strictly positive on $\Omega_c$. Since $\Omega_c$ is compact (Proposition~\ref{prop:sufficient_radial}), the infimum is attained and $\underline{\sigma} > 0$.
\end{proof}

\section{Proof of Proposition~\ref{prop:centralized_PT}}
\label{app:centralized_PT}
Since $V(\bz(0)) < c^*$, the sublevel set $\Omega_{V(0)}$ is compact by Proposition~\ref{prop:sufficient_radial}, and Lemma~\ref{lem:nondeg} yields the uniform lower bound $\underline{\sigma} > 0$ on this set. Using $\nabla V = \nabla\mathbf{S}(\bz)^\top\mathbf{S}(\bz)$ and the minimum singular value bound gives the PL condition $\|\nabla V(\bz)\|^2 \geq \underline{\sigma}^2\|\mathbf{S}(\bz)\|^2 = 2\underline{\sigma}^2 V(\bz)$, so along~\eqref{eq:centralized_GD}:
\begin{equation}
    \dot{V} = -\sigma_{opt}(t)\|\nabla V\|^2 \leq -\frac{2\underline{\sigma}^2\mu_c}{T-t}\,V(\bz).
    \label{eq:centralized_dissipation}
\end{equation}
A contradiction argument identical to Step~5 of the proof of Theorem~\ref{thm:main} (with $W_c = W_\delta = 0$ and $W = W_o = V$) establishes $V(\bz(t)) \leq V(\bz(0))$ for all $t \in [0,T)$, after which integrating~\eqref{eq:centralized_dissipation} gives~\eqref{eq:centralized_decay}. 
We obtain finite arc length 
\begin{equation}
    \int_0^T\|\dot{\bz}\|\,dt \leq C\int_0^T(T{-}t)^{\gamma/2-1}dt < \infty
\end{equation}
since $\gamma > 0$ confirms that $\bz(t)$ converges to the unique zero $\bz^*$ of $\mathbf{S}$. \hfill$\blacksquare$

\bibliographystyle{IEEEtran}
\bibliography{Bib1}

\end{document}